\newtheorem{theorem}{Theorem}
\newtheorem{proposition}[theorem]{Proposition}
\newtheorem{lemma}[theorem]{Lemma}
\newtheorem{corollary}[theorem]{Corollary}
\newtheorem{question}[theorem]{Question}
\newtheorem{problem}[theorem]{Problem}
\newtheorem{conj}[theorem]{Conjecture}
\newtheorem*{claim*}{Claim}
\theoremstyle{definition}
\theoremstyle{remark}
\newtheorem{remark}[theorem]{Remark}
\newtheorem{remarks}[theorem]{Remarks}
\newcommand{\ProofEndBox}{{\ifhmode\unskip\nobreak\hfil\penalty50 \else
          \leavevmode\fi\quad\vadjust{}\nobreak\hfill$\Box$
            \finalhyphendemerits=0 \par}}
\newcommand{\R}{{\mathbb{R}}}
\newcommand{\C}{{\mathbb{C}}}
\DeclareMathOperator{\im}{im}
\DeclareMathOperator{\alts}{alt-sym}
\DeclareMathOperator{\sym}{sym}
\newcommand{\emb}{\textsc{Embed}}
\newcommand{\Z}{{\mathbb{Z}}}
\newcommand{\sgn}{\mathop {\rm sgn}\nolimits}
\newcommand\FF{\mathcal{F}}
\newcommand\GG{\mathcal{G}}
\newcommand\HH{\mathbb{H}}
\renewcommand\AA{\mathbf{A}}
\newcommand\BB{\mathbf{B}}
\newcommand\II{\mathbf{I}}
\newcommand\QQ{\mathbf{Q}}
\newcommand\XX{\mathbf{X}}
\newcommand\makevec[1]{{\bf #1}}
\def \oo {\makevec{o}}
\def \yy {\makevec{y}}
\newcounter{sideremark}
\newcommand{\marrow}{\stepcounter{sideremark}\marginpar{$\boldsymbol{\longleftarrow\scriptstyle\arabic{sideremark}}$}}
\newif\ifcmts
\newcommand{\pavel}[1]{{\color{teal}\vskip 5pt\textsf{*** (Pavel) \marrow #1\vskip 5pt}}}
\newcommand{\martin}[1]{{\color{blue}\vskip 5pt\textsf{*** (Martin) \marrow #1\vskip 5pt}}}
\newcommand{\pavel}[1]{}
\newcommand{\martin}[1]{}
\title{Embeddings of $k$-complexes into $2k$-manifolds.\thanks{This research is
    supported by the GA\v{C}R grant 19-04113Y.}}
\author[1,2]{Pavel Pat\'{a}k}
\author[1]{Martin Tancer}
\affil[1]{\small Department of Applied Mathematics, Charles University, Malostransk\'{e} n\'{a}m.
25, 118~00~~Praha~1, Czech Republic}
\affil[2]{Czech Technical University in Prague, Faculty of Information Technology, Th\'{a}kurova 2700/9, 160~00~~Praha~6, Czech Republic}
\begin{document}

\maketitle
\begin{abstract}
 We improve the bound on Kühnel's problem to determine the smallest $n$
  such that the $k$-skeleton
  of an $n$-simplex $\Delta_n^{(k)}$ does not embed into a compact PL $2k$-manifold $M$ by showing 
  that if $\Delta_n^{(k)}$ embeds into $M$, then $n\leq (2k+1)+(k+1)\beta_k(M;\mathbb Z_2)$.
  As a consequence we obtain improved Radon and Helly type results for set systems in such manifolds.

  Our main tool is a new description of an obstruction for embeddability of a
  $k$-complex $K$ into a compact PL $2k$-manifold $M$ via the intersection form on $M$. In our approach we need that for every map $f\colon K\to M$
the restriction to the $(k-1)$-skeleton of $K$ is nullhomotopic. In particular, 
this condition is satisfied in interesting cases if $K$ is $(k-1)$-connected,
  for example a $k$-skeleton of $n$-simplex, or if $M$ is $(k-1)$-connected.
  In addition, if $M$ is $(k-1)$-connected and $k\geq 3$, the
  obstruction is complete, meaning that a $k$-complex
$K$ embeds into $M$ if and only if the obstruction vanishes.
For trivial intersection forms, our obstruction coincides with the standard van
  Kampen obstruction. However, if the form is non-trivial, the obstruction is not linear but rather 'quadratic' in a sense
that it vanishes if and only if certain system of quadratic diophantine
  equations is solvable. This may potentially be useful in attacking
  algorithmic decidability of embeddability of $k$-complexes into PL
  $2k$-manifolds.
\end{abstract}

\section{Introduction}

\subparagraph{Motivation.}
This paper has three main goals:
\begin{enumerate}[1)]
 \item Describe an obstruction for (almost)-embeddability of $k$-dimensional
   simplicial complexes into compact $2k$-dimensional PL manifolds
    (\Cref{t:obstruction,t:complete}). This extends the standard van Kampen obstruction for embeddability into $\R^{2k}$.
 \item Improve the bounds for so-called K\"u{hnel} problem: Provide an upper
   bound on $n$ such that the $k$-skeleton of the $n$-simplex embeds into
    compact $2k$-dimensional PL-manifold $M$ (\Cref{t:kuhnel_improved}).
 \item Use the bounds from the previous item to obtain versions of Radon's and Helly's theorem on manifolds (\Cref{thm:radon,cor:helly}).
\end{enumerate}

Motivation for such research emerges from various directions:

The classical setting, related to the first goal, considers the case of embeddings of $k$-complexes into $\R^{2k}$ (a $k$-complex is always embeddable into $\R^{2k+1}$, thus the target dimension $2k$ is the first nontrivial dimension).
This line of research was initiated by results of van Kampen and
Flores~\cite{vankampen32,flores32} on nonembeddability of the $k$-skeleton of
the $(2k+2)$-simplex, $\Delta_{2k+2}^{(k)}$, and the $(k+1)$-fold join of three
isolated points into $\R^{2k}$. This case is in general well understood:
If $k \neq 2$, embeddability of $K$ in $\R^{2k}$ is characterized via vanishing of so-called
van Kampen
obstruction~\cite{vankampen32,shapiro57,wu65,melikhov09},
which is even efficiently computable (details on computability are given
in~\cite{matousek-tancer-wagner11}). If $k=2$, the obstruction is
incomplete~\cite{freedman-krushkal-teichner94}, and it seems to be a
challenging problem to determine whether embeddability of $2$-complexes into
$\R^4$ is decidable. (Only NP-hardness is
known~\cite{matousek-tancer-wagner11}.)
However, there are many interesting target spaces that are not $\R^{2k}$.
In geometry one often works with projective spaces, incidence problems lead to
embeddings into Grassmanians or flag
manifolds, etc. A possible concrete example where the ideas of this
paper can be useful are considerations of Helly type results as
in~\cite{goaoc-patak-patakova-tancer-wagner17}.\footnote{However, our work
  should be understand only as a first step towards an improvement
of~\cite{goaoc-patak-patakova-tancer-wagner17}. In particular, we did not
attempt to upgrade our results to \emph{homological almost embeddings} which
are really used in~\cite{goaoc-patak-patakova-tancer-wagner17}.} Here considerations of a
general manifold $M$ become apparent, for example, when considering Helly-type
theorems for line transversals as
in~\cite{cheong-goaoc-holmsen-petitjean08}.

In relation to our second main goal, embeddability of the $k$-skeleton of $n$-simplex $\Delta^{(k)}_n$ to a $2k$-manifold 
was
considered in \cite{kuhnel94, volovikov96,
goaoc-mabillard-patak-patakova-tancer-wagner17}. Volovikov~\cite{volovikov96} shows, for quite 
general $M$, that there is no embedding $f\colon |\Delta^{(k)}_{2k+2}| \to M$
provided that $f$ induces a trivial map on (co)homology, which generalizes
nonembeddability of $\Delta_{2k+2}^{(k)}$ in $\R^{2k}$.\footnote{Volovikov's
result is in fact even more general in different directions.} Given a
$(k-1)$-connected $2k$-manifold $M$ such that $k$-skeleton of $n$-simplex embeds
into $M$, K\"{u}hnel conjectured an upper bound on $n$ depending only on $k$
and the Euler characteristic of $M$; see equation~\eqref{e:Kuhnel_chi}
below. A weaker bound was proved in
\cite{goaoc-mabillard-patak-patakova-tancer-wagner17}. As an application of our
tools, we will show how this bound can be significantly improved (for
compact PL manifolds\footnote{In fact, we are not sure from the statement of
the conjecture in~\cite[Conjecture~B]{kuhnel94} whether it regards arbitrary
(possibly non-compact) manifolds or whether it regards polyhedral manifolds
discussed in the paper which are PL and closed (a fortiori compact).}). Such improvement also yields improved Radon and Helly type theorems on manifolds, which is our third goal.

Finally, in a special case when $k=1$ our research coincides with a classical topic of embeddings
of graphs in surfaces~\cite{mohar-thomassen01}; and, in particular, our work is
related to Hanani--Tutte type results for graphs on
surfaces~\cite{pelsmajer-schaefer-stasi09, fulek-kyncl19,
fulek-kyncl18socg}. In the language of these references, our algebraic
description in this case provides a characterization of graphs admitting an
independently even drawing into a given surface.

\subsection{The K\"{u}hnel problem and Helly-type results}
Before we explain the details of our description of embeddability of
$k$-complexes into $2k$-manifolds, let us survey a few results that we can
reach with our tools.

\subparagraph{K\"{u}hnel's conjecture.}
Kühnel conjectured~\cite{kuhnel94} that if the $k$-dimensional skeleton $K:=\Delta_n^{(k)}$ can be embedded into a $(k-1)$-connected $2k$-manifold $M$,
then \begin{equation}
\label{e:Kuhnel_chi}
\binom{n-k-1}{k+1}\leq (-1)^k\binom{2k+1}{k+1}(\chi(M) - 2).
     \end{equation}
Because of $(k-1)$-connectivity, this inequality is equivalent to 
\begin{equation}
  \label{e:kuhnel}
  \binom{n-k-1}{k+1}\leq \binom{2k+1}{k+1}\beta_k(M;\Z_2),
\end{equation}
which seems to hold even without the connectivity assumption. (Here
$\beta_k$ denotes the Betti number.)

The special case $k=1$, of Kühnel's conjecture is known as Heawood
inequality and it is fully confirmed in this case (see~\cite{ringel74} for
discussion).\footnote{Note that $\Delta_n^{(k)}$ has $n+1$ vertices, thus $n$
is shifted by one when compared with the standard statement of the Heawood
inequality.} 
For $k \geq 2$, a recent far reaching work Adiprasito~\cite{adiprasito18_arxiv} proves the K\"{u}hnel
bound under an additional assumption that the embedding is sufficiently tame.
Without the tameness assumption, together with Goaoc, Mabillard, Pat\'{a}kov\'{a} and
Wagner~\cite{goaoc-mabillard-patak-patakova-tancer-wagner17}, we have obtained
a bound $n \leq 2\beta_k(M;\Z_2)\binom{2k+2}k + 2k + 4$. Here we demonstrate how the
`obstruction machinery' may improve this bound (under an extra assumption that
$M$ is PL). Once the machinery is set up, the main idea of the proof is
relatively simple; see the sketch at the beginning of
Section~\ref{s:kuhnel}.

Given a simplicial complex $K$ and a manifold $M$ (or arbitrary topological
space in general) an \emph{almost embedding} of $K$ into $M$ is a map $f \colon |K| \to M$ such
that $f(\sigma) \cap f(\tau) = \emptyset$ whenever $\sigma$ and $\tau$ are
disjoint simplices of $K$.
Every embedding is an almost embedding.
By $\Omega_{\Z_2} \colon H_k(M;\Z_2) \times H_k(M;\Z_2) \to \Z_2$ we will denote the
$\Z_2$-intersection form on $M$. 
The intersection form is discussed in more detail in
Subsection~\ref{ss:if}.
The main properties are that $\Omega_{\Z_2}$
is a symmetric bilinear form and if $z$ and $z'$ are two general position
$k$-cycles in $M$, then $\Omega_{\Z_2}([z],[z'])$ counts the number of
crossings between $z$ and $z'$ modulo $2$; here $[\cdot]$ stands for the
corresponding homology class.

\begin{theorem}
\label{t:kuhnel_improved}
 If the $k$-skeleton $\Delta_n^{(k)}$ of an $n$-simplex can be almost embedded
  into a compact (possibly with boundary) PL $2k$-manifold $M$,
 then 
 \begin{enumerate}[$(i)$]
   \item
   $n\leq (2k+1) + (k+1)\beta_k(M;\Z_2)$ and
 \item
   $n\leq (2k+1) + \frac12(k+2)\beta_k(M;\Z_2)$ if the intersection form
     on $M$ is alternating, that is $\Omega_{\Z_2}(h,h)=0$
    for all $h\in H_k(M;\Z_2)$.
  \end{enumerate}
\end{theorem}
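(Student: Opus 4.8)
The plan is to apply Theorem~\ref{t:obstruction} to $K = \Delta_n^{(k)}$ and extract an inequality by a counting/dimension argument on the skew-symmetric cochains, in the spirit of the classical van Kampen--Flores argument but now over the intersection form. First I would recall that condition (H) holds automatically for $K=\Delta_n^{(k)}$ (Corollary~\ref{c:H_OK}), so an almost embedding yields a homomorphism $\psi \colon C_k(\Delta_n^{(k)};\Z_2)\to H_k(M;\Z_2)$ with $[\omega_\psi]_F = \tilde\oo(K)$ over $\Z_2$. Since over $\Z_2$ the van Kampen obstruction $\tilde\oo(\Delta_n^{(k)})$ is the well-understood nonzero class exactly when $n \geq 2k+2$ (the Flores/van Kampen computation: $\tilde\oo$ is nonzero on the fundamental cocycle of the relevant sphere in the deleted product), the equation $[\omega_\psi]_F=\tilde\oo(K)$ forces $\omega_\psi$ to be, modulo $F$, this distinguished nonzero cochain. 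The quotient by $F$ should be controlled well enough that $\omega_\psi \neq 0$ in $C^{2k}_{\skw}(\tilde K;\Z_2)$ itself for $n$ large; in particular $\omega_\psi$ must be nonzero on some specific $2k$-cell $\sigma\times\tau$ coming from a pair of disjoint $k$-faces spanning a sub-$(2k+2)$-simplex, and one propagates this to many cells.

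The heart of the argument is then linear algebra over $\Z_2$. Consider the $n+1$ vertices; the $k$-simplices of $\Delta_n^{(k)}$ are the $(k+1)$-subsets. The map $\psi$ assigns to each $k$-simplex a vector $\psi(\sigma)\in H_k(M;\Z_2)$, a space of dimension $\beta_k(M;\Z_2)=:\beta$. For two disjoint $k$-simplices $\sigma,\tau$ we need $\Omega(\psi(\sigma),\psi(\tau))$ to match the prescribed value dictated by $\tilde\oo$ (which, on the van Kampen cocycle, is $1$ for the canonical pairing of complementary faces and vanishes elsewhere). I would pick a greedy/linear-algebraic selection: if $n$ is too large relative to $\beta$, then among sufficiently many $k$-simplices the images $\psi(\sigma)$ become linearly dependent in a way that forces $\Omega(\psi(\sigma),\psi(\tau))$ to take a value inconsistent with the required pattern for \emph{all} choices of a complementary $\tau$; quantitatively, a nonzero alternating (or symmetric) bilinear form of rank $\le \beta$ can "separate" only boundedly many faces, and counting the faces in $\Delta_n^{(k)}$ against this capacity gives $n \le (2k+1)+(k+1)\beta$. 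For part $(ii)$, when $\Omega$ is alternating one gets an extra factor of roughly $\tfrac12$ because an alternating form of rank $2r$ is determined by an $r$-dimensional isotropic decomposition, halving the effective dimension available, which tightens the bound to $n\le (2k+1)+\tfrac12(k+2)\beta$.

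More concretely, I expect the clean route is: reduce to showing that if $n \ge (2k+1)+(k+1)\beta+1$ then no $\psi$ with the required property exists. Restrict attention to a suitable "simplex-wise" sub-collection — e.g. fix $2k+1$ "spine" vertices and let the remaining $n-2k-1$ vertices each, together with a fixed $k$-subset of the spine, give a family of $k$-simplices $\sigma_1,\dots,\sigma_{n-2k-1}$ that are pairwise non-disjoint but each disjoint from a common $\tau$. The constraints $\Omega(\psi(\sigma_i),\psi(\tau))=c_i$ (prescribed) together with the freedom in $\psi(\tau)$ and the rank bound on $\Omega$ force $n-2k-1 \le$ (something like $(k+1)\beta$) by a rank inequality: the functionals $\Omega(\psi(\sigma_i),-)$ live in a space of dimension $\le\beta$, and the combinatorial structure of which pairs are disjoint (governed by the $(k+1)$-element faces) introduces the factor $k+1$. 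In the alternating case one additionally uses that $\Omega(h,h)=0$ to halve this.

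The main obstacle, I expect, is pinning down exactly how the quotient by $F$ interacts with the counting — i.e. ensuring that the equation $[\omega_\psi]_F=\tilde\oo(K)$ over $\Z_2$ genuinely constrains $\psi$ enough (the subgroup $F$ is defined later in the paper and presumably consists of coboundary-type terms that do not affect the relevant values), and organizing the combinatorics of "disjoint pairs of $k$-faces in $\Delta_n^{(k)}$" so that the rank-of-a-bilinear-form bound produces precisely the constants $(2k+1)+(k+1)\beta$ and $(2k+1)+\tfrac12(k+2)\beta$ rather than something weaker. I would first nail the $\Z_2$-reduction statement (likely already isolated as a lemma elsewhere in the paper), then do the bilinear-form counting as a self-contained combinatorial lemma about alternating/symmetric forms of bounded rank evaluated on the faces of a skeleton of a simplex.
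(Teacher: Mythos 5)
Your high-level plan (use Corollary~\ref{c:H_OK} to get $\psi$, then extract a dimension/rank bound from the bilinear form $\Omega$ evaluated on $\psi$) matches the paper's strategy in spirit, but the proposal stops short exactly at the two places you flag as ``obstacles,'' and both are where the real work lies.

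First, the statement ``$\Omega(\psi(\sigma),\psi(\tau))$ must match the prescribed value dictated by $\tilde\oo$'' is not correct as written: the hypothesis is only $[\omega_\psi]_F = \tilde\oo(K)$, i.e.\ $\omega_\psi$ and a chosen representative of $\tilde\oo(K)$ differ by an element of $F$ (a combination of finger-move cochains). So $\omega_\psi$ is \emph{not} pinned down on individual cells $\sigma\times\tau$, and you cannot read off a constraint from a single disjoint pair. The paper handles this by evaluating only on $2k$-\emph{cycles} in $\bar K$ (which kills the coboundary ambiguity): Proposition~\ref{prop:completeSkeleta} extracts two invariant conditions, namely (i) $\Omega(\psi(\partial\kappa),\psi(\partial\kappa'))=0$ for any two disjoint $(k{+}1)$-simplices $\kappa,\kappa'$ (evaluated on the cycle $\langle \partial\kappa\times\partial\kappa'\rangle$), and (ii) a sum of $\Omega$-values over pairs in any induced $\Delta^{(k)}_{2k+2}\subseteq K$ equals $1$ (evaluated on the van~Kampen cycle $z_J$, using $\Phi(\vartheta_g)(z_J)=1$ and Lemma~\ref{l:beta_omega_z_J}). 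These are the well-defined constraints; without them your linear-algebra step has no data to work with.

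Second, the ``greedy spine'' argument and the phrase ``a form of rank $\le\beta$ can separate only boundedly many faces'' do not produce the constants $(k+1)\beta$ and $\tfrac12(k+2)\beta$, and you do not show how the combinatorics of $\Delta_n^{(k)}$ yields the $(k+1)$ factor. The paper's mechanism is a clean induction on the rank $r$ of a specific subspace: if some $\kappa,\kappa'$ have $\Omega(h_\kappa,h_{\kappa'})=1$, take the $\Omega$-orthogonal projection $\pi$ that kills $h_\kappa$ (or $\langle h_\kappa,h_{\kappa'}\rangle$ in the alternating case), delete the $k{+}1$ (resp.\ $k{+}2$) ``new'' vertices of $\kappa$ (resp.\ $\kappa\cup\kappa'$) from $\Delta_n$, and check that $\psi'=\pi\psi$ restricted to the remaining $\Delta_{n-k-1}^{(k)}$ (resp.\ $\Delta_{n-k-2}^{(k)}$) still satisfies conditions (i) and (ii) of Proposition~\ref{prop:completeSkeleta}, with rank dropping by $1$ (resp.\ $2$). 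The base case $r=0$ forces $n\le 2k+1$ via (ii). This is what produces exactly $n\le (2k+1)+(k+1)r$ and $n\le(2k+1)+\tfrac12(k+2)r$, and it is a genuinely different device from a one-shot counting argument.

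So the gap is concrete: you need (a) the cycle-evaluation device of Proposition~\ref{prop:completeSkeleta}/Lemma~\ref{l:beta_omega_z_J} to get well-defined constraints on $\psi$ in the presence of $F$, and (b) the projection-and-delete-vertices induction to turn those constraints into the stated bounds. Neither is supplied by the proposal, and the ``prescribed value on each cell'' premise would actually be false.
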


If $\beta_k(M;\Z_2)=1$, our bounds agree with the value proposed by Kühnel
and if the form is alternating
the same is true for
$\beta_k(M;\Z_2)=2$. The condition that the form is alternating is a natural
condition that occurs, for example, if $M$ is a connected sum of $S^k \times
S^k$. One of the advantages of Theorem~\ref{t:kuhnel_improved}
is that it also applies to manifolds which are not $(k-1)$-connected. This
distinguishes it from Kühnel's conjecture. Using
Theorem~\ref{t:kuhnel_improved} we can, for example, see
that there is no (almost) embedding of $\Delta_{12}^{(3)}$ into $\R P^6$.

For $k=1$, Theorem~\ref{t:kuhnel_improved} does not recover the Heawood
inequality. However, considering that Theorem~\ref{t:kuhnel_improved} is stated
for almost embeddings, it seems to say something new even for $k=1$ as it is an
open question whether embeddability and almost embeddability coincide for
graphs on surfaces~\cite[Problem~5.2]{fulek-kyncl19}.
  Almost embeddability is relevant for example in context of Helly-type
theorems; see Theorem~\ref{thm:radon} and Corollary~\ref{cor:helly} below.

There are several cases where the inequalities from Theorem~\ref{t:kuhnel_improved}
are tight:
there is a $6$-point triangulation of the real projective plane ($k=1$, $\beta_1(M;\Z_2) = 1$, $n=5$),
a $9$-point triangulation of the complex projective plane ($k=2$, $\beta_2(M;\Z_2) = 1$, $n=8$)~\cite{Kuhnel1983} and 15-point triangulation of the quaternionic projective plane ($k=4$, $\beta_4(M;\Z_2)=1$, $n=14$)~\cite{Gorodkov19},
and the torus can by triangulated using $7$ vertices only ($k=1$, $\beta_1(M;\Z_2) = 2$, $n=6$). Quick computation of the number of faces reveals that each of these triangulations necessarily contains the complete $k$-skeleton of $\Delta_n$. (It is $(k+1)$-neighbourly~\cite{kuhnel94}.)

In addition, there is a hope that bounds of
Theorem~\ref{t:kuhnel_improved} can be still improved significantly by using
our tools, possibly giving a solution of the K\"{u}hnel conjecture. 
In Section~\ref{s:kuhnel} we pose a specific conjecture (purely in
combinatorics and linear algebra), Conjecture~\ref{c:kuhnel_lambda}, that
implies K\"{u}hnel's conjecture (in case that $M$ is a compact PL-manifold). A computer
assisted search for small values of $k$ and $\beta_k(M;\Z_2)$ suggests that
Conjecture~\ref{c:kuhnel_lambda} may hold.

\paragraph{Radon and Helly type theorems.} 
Improved bounds on the K\"{u}hnel problem as in
Theorem~\ref{t:kuhnel_improved} immediately imply improved bounds on the Radon number (value $r$ in the statement below) in the theorem below. Consequently one obtains better bounds on Helly's number~\cite{Levi1951}, Tverberg's numbers~\cite{jamison1981}, fractional Helly number~\cite{boundedRadon_fractHelly}, existence of weak $\varepsilon$-nets and $(p,q)$-theorems~\cite{boundedRadon_fractHelly, alon2002}.

\begin{theorem}\label{thm:radon}
  Let $M$ be a compact PL $2k$-manifold. 
 Let $\operatorname{cl}\colon 2^M\to 2^M$ be a closure operator.\footnote{A closure operator is any function $\operatorname{cl}\colon 2^M\to 2^M$ that for all $S,R\subseteq M$ satisfies $S\subseteq \operatorname{cl} S$, $R\subseteq S\Rightarrow \operatorname{cl}(R)\subseteq\operatorname{cl}(S)$ and $\operatorname{cl}\left(\operatorname{cl} S\right)=\operatorname{cl}S$. Typical examples are convex and affine hulls in $\R^d$ or topological closure operators in topological spaces.}
 Let $P\subseteq M$ be a set of size $r$,
 such that $\operatorname{cl}S$ is (topologically) $k$-connected\footnote{From the proof follows that it suffices to require that the $(|S|-1)$-th homotopy group of  $\operatorname{cl}(S)$ is trivial for each $S$ of size at most $k+1$. Moreover, if one is willing to increase the bound on $r$, it is possible to use the ideas from~\cite{matousek97} or~\cite{patak19} and allow that $\operatorname{cl}(S)$ have more path-connected components, if all of them are sufficiently connected.} for every $S\subseteq P$ of size at most $k+1$.
 
 \begin{enumerate}[(i)]
   \item If $r \geq 2k+3 + (k+1)\beta_k(M;\Z_2)$, or
   \item if the intersection form of $M$ is alternating (over $\Z_2$) and $r \geq 2k+3 +
   \frac12(k+2)\beta_k(M;\Z_2)$, 
  \end{enumerate}
     then there are two disjoint subsets $P_1,P_2\subseteq P$
  such that $\operatorname{cl}(P_1)\cap\operatorname{cl}(P_2)\neq\emptyset$. 
\end{theorem}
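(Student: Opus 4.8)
The plan is to establish the contrapositive by the standard topological Radon reduction to the nonembeddability estimate of Theorem~\ref{t:kuhnel_improved}. So assume that $P=\{p_1,\dots,p_r\}$ admits no Radon partition, i.e.\ $\operatorname{cl}(P_1)\cap\operatorname{cl}(P_2)=\emptyset$ for every pair of disjoint nonempty $P_1,P_2\subseteq P$; I will construct an almost embedding of $\Delta_{r-1}^{(k)}$ into $M$, where the vertex set of $\Delta_{r-1}^{(k)}$ is identified with $P$. Granting this, Theorem~\ref{t:kuhnel_improved}$(i)$ forces $r-1\le (2k+1)+(k+1)\beta_k(M;\Z_2)$, and Theorem~\ref{t:kuhnel_improved}$(ii)$ forces $r-1\le(2k+1)+\frac12(k+2)\beta_k(M;\Z_2)$ when the intersection form is alternating over $\Z_2$; each of these contradicts the corresponding lower bound on $r$ in the hypothesis, so a Radon partition must exist.

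To build the map $f\colon |\Delta_{r-1}^{(k)}|\to M$, I would proceed skeleton by skeleton, maintaining the invariant that $f(|\sigma|)\subseteq \operatorname{cl}(V(\sigma))$ for every face $\sigma$, where $V(\sigma)\subseteq P$ denotes the vertex set of $\sigma$. On vertices set $f(p_i):=p_i$, which lies in $\operatorname{cl}(\{p_i\})$. Suppose $f$ is already defined on the $(j-1)$-skeleton with the invariant, for some $1\le j\le k$. For a $j$-face $\sigma$, every proper face $\sigma'$ satisfies $V(\sigma')\subseteq V(\sigma)$, so by monotonicity of $\operatorname{cl}$ the restriction $f|_{\partial|\sigma|}$ maps $\partial|\sigma|\cong S^{j-1}$ into $\operatorname{cl}(V(\sigma))$. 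Since $|V(\sigma)|=j+1\le k+1$, the hypothesis says $\operatorname{cl}(V(\sigma))$ is $k$-connected, hence $(j-1)$-connected (here $j-1\le k-1$), so this map of $S^{j-1}$ extends over $|\sigma|\cong D^{j}$ inside $\operatorname{cl}(V(\sigma))$; carrying this out for every $j$-face extends $f$ to the $j$-skeleton and preserves the invariant. (As the footnote records, triviality of $\pi_{|S|-1}(\operatorname{cl}(S))$ for $|S|\le k+1$ — indeed less — already suffices, and by the same token one may weaken connectivity at the cost of a larger $r$ in the spirit of \cite{matousek97,patak19}.)

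Finally I would check that the resulting $f$ is an almost embedding. If $\sigma$ and $\tau$ are vertex-disjoint faces of $\Delta_{r-1}^{(k)}$, then $V(\sigma)$ and $V(\tau)$ are disjoint nonempty subsets of $P$, so the Radon-independence assumption gives $\operatorname{cl}(V(\sigma))\cap\operatorname{cl}(V(\tau))=\emptyset$; since $f(|\sigma|)\subseteq\operatorname{cl}(V(\sigma))$ and $f(|\tau|)\subseteq\operatorname{cl}(V(\tau))$, we conclude $f(|\sigma|)\cap f(|\tau|)=\emptyset$, which is precisely the almost-embedding condition. This completes the reduction, and the claimed inequalities on $r$ follow.

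I expect no serious obstacle here: the entire topological content sits in Theorem~\ref{t:kuhnel_improved}, and this is the classical Bajmóczy–Bárány-style argument. The only points requiring care are bookkeeping — matching the connectivity hypothesis on $\operatorname{cl}(S)$ to the homotopy group $\pi_{j-1}$ invoked at dimension $j$ (only dimensions $j\le k$, hence at most $\pi_{k-1}$, ever occur), and checking that "$\operatorname{cl}$ is a closure operator with $\operatorname{cl}(S)$ sufficiently connected" is exactly what turns "no Radon partition" into the disjointness of images that makes $f$ an almost embedding.
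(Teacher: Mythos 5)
Your proposal is correct and follows the paper's argument essentially verbatim: contradict a hypothetical lack of Radon partitions by building an almost embedding $f\colon|\Delta_{r-1}^{(k)}|\to M$ skeleton by skeleton, using monotonicity of $\operatorname{cl}$ and $k$-connectedness to extend over each face while maintaining the invariant $f(|\sigma|)\subseteq\operatorname{cl}(V(\sigma))$, then invoking Theorem~\ref{t:kuhnel_improved}. Your additional bookkeeping that only $\pi_{j-1}$ of $\operatorname{cl}(V(\sigma))$ for $j\le k$ is ever used is a welcome clarification, but it is the same proof.
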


\begin{corollary}[Helly-type theorem]\label{cor:helly}
  Let $M$ be a compact PL $2k$-manifold. Let $\FF$ be a finite collection of subsets of $M$ such that $\bigcap \GG$ is $k$-connected or empty for every subfamily $\GG \subsetneq \FF$.
  If $\bigcap \GG$ is nonempty for every $\GG\subseteq \FF$ of cardinality less than $r$, where $r$ is as in the previous theorem, then $\bigcap \FF\neq\emptyset$.
\end{corollary}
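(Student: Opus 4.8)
The plan is to derive Corollary~\ref{cor:helly} from Theorem~\ref{thm:radon} by the standard argument that bounds on the Radon number imply bounds on the Helly number. The idea is contrapositive: assume $\bigcap\FF=\emptyset$ while $\bigcap\GG\neq\emptyset$ for every proper subfamily $\GG\subsetneq\FF$ of size less than $r$; we will produce a point $P$ witnessing the Radon-type conclusion and a closure operator to which Theorem~\ref{thm:radon} applies, reaching a contradiction.

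First I would reduce to a minimal counterexample: among all subfamilies $\FF'\subseteq\FF$ with $\bigcap\FF'=\emptyset$, pick one of minimum cardinality, say $\FF'=\{F_1,\dots,F_m\}$. By minimality every proper subfamily has nonempty intersection, and in particular $m\le r$ would contradict the hypothesis that all subfamilies of size less than $r$ have nonempty intersection — so we may assume $m\ge r$; actually it is cleaner to keep $\FF'$ of size exactly $r$ if $m\ge r$ by the following: if $m>r$ we still only need $r$ of the sets, but the subtlety is that we need $\bigcap$ of the chosen $r$ sets to be empty, which a priori fails. The correct route is the classical one: for each index $i$ choose a point $p_i\in\bigcap_{j\ne i}F_j$ (nonempty by minimality of $\FF'$, assuming $m\le$ something, so first argue $m\le r$ is impossible hence reduce to $m=r$ is not automatic). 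I will instead invoke the well-known combinatorial fact that Radon number $\le\rho$ implies Helly number $\le\rho-1$ — applied with the abstract convexity space given by $\operatorname{cl}$ restricted to the point set $P:=\{p_1,\dots,p_m\}$ — so it suffices to run the point-selection argument: set $P=\{p_1,\dots,p_r\}$ for $r$ of the sets from a minimal counterexample of size $\ge r$, note $p_i\in F_j$ for all $j\ne i$, apply Theorem~\ref{thm:radon} to get disjoint $P_1,P_2\subseteq P$ with $\operatorname{cl}(P_1)\cap\operatorname{cl}(P_2)\ne\emptyset$, pick a point $x$ in that intersection, and observe $x\in\operatorname{cl}(P_1)\subseteq F_j$ for every $j$ with $p_j\notin P_1$ (since such $p_j$ has index in $P_2$'s complement... more precisely $x$ lies in every $F_j$ whose index is represented in neither block's "missing" set), yielding $x\in\bigcap\FF'$, a contradiction.

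To make the application of Theorem~\ref{thm:radon} legitimate I must check its hypotheses for the chosen $P$: it has size $r$ as required in cases (i) or (ii), and for every $S\subseteq P$ with $|S|\le k+1$ the set $\operatorname{cl}(S)$ must be $k$-connected (or the weaker homotopy condition in the footnote). This is where the hypothesis of Corollary~\ref{cor:helly} is used — but note the hypothesis there is stated for intersections $\bigcap\GG$, not for closures $\operatorname{cl}(S)$ of point sets. So the actual setup of the corollary must be that the closure operator is the one sending $S\mapsto\bigcap\{F\in\FF : S\subseteq F\}$ (the "convex hull" dual to the family $\FF$), and the $k$-connectivity of $\bigcap\GG$ for proper subfamilies $\GG$ translates into $k$-connectivity of $\operatorname{cl}(S)$ for the relevant small sets $S$. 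Unwinding this dictionary is the only real content; everything else is the routine Radon-implies-Helly bookkeeping.

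The main obstacle I anticipate is precisely getting the combinatorial translation exactly right: ensuring that a minimal counterexample has size at least $r$ so that we may select $r$ points, that each selected point lies in all but one of the chosen sets, that the closure operator built from $\FF$ has its small-set closures equal to intersections of proper subfamilies (so the connectivity hypothesis transfers), and that the point $x\in\operatorname{cl}(P_1)\cap\operatorname{cl}(P_2)$ indeed lands in $\bigcap\FF'$. Each of these is standard in the literature on bounded Radon numbers (see the references to~\cite{Levi1951,boundedRadon_fractHelly}), so I expect no deep difficulty, only the need for care with indices and with the direction of the closure-operator/nerve duality.
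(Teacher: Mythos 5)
Your proposal is correct and follows essentially the same route as the paper: reduce to a minimal subfamily $\GG$ with $\bigcap\GG=\emptyset$ (hence $|\GG|\geq r$), pick a point in each leave\mbox{-}one\mbox{-}out intersection, apply Theorem~\ref{thm:radon} to these points with the closure operator $\operatorname{cl}_\FF(S):=\bigcap\{F\in\FF : S\subseteq F\}$, and observe that since each $F_j$ is missed by at most one chosen point while $P_1,P_2$ are disjoint, one of the parts is contained in $F_j$, so the common point of $\operatorname{cl}_\FF(P_1)\cap\operatorname{cl}_\FF(P_2)$ lies in $\bigcap\GG$ --- a contradiction. The hesitations in your middle paragraph (about whether a chosen $r$-subset has empty intersection, and about $m\le r$) are unnecessary: one only needs the full minimal family to have empty intersection, and the $k$-connectivity hypothesis transfers because for any nonempty $S\subseteq P$ with $|S|\le k+1$ the family $\{F\in\FF : S\subseteq F\}$ is a nonempty proper subfamily with nonempty intersection, exactly as the corollary's hypothesis requires.
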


\begin{proof}[Proof~\cite{Levi1951,Radon1921}]
 Consider the following closure operator \[\operatorname{cl}_\FF(S):=\bigcap_{\substack{F\in\FF \\ S\subseteq F}} F.\]
  If $\bigcap \FF=\emptyset$, let $\GG\subseteq \FF$ be a minimal subset with $\bigcap \GG=\emptyset$. By assumption $|\GG|\geq r$.
  By minimality of $\GG$ for each $G\in \GG$ there is a point $y_G\in\bigcap\GG\setminus \{G\}$. 
 The set $\{y_G\mid G\in\GG\}$ has at least $r$ points. Thus Theorem~\ref{thm:radon} guarantees that it can be split into two disjoint sets $P_1\sqcup P_2$ such that there is a point $y\in\operatorname{cl}_\FF(P_1)\cap \operatorname{cl}_\FF(P_2)$. By our choice of the closure operator, such $y$ lies in every set of $\GG$, contradicting $\bigcap\GG\neq\emptyset$.
\end{proof}

Theorem~\ref{thm:radon} follows the line of research of deducing Helly and Radon type theorems
from non-embeddability
results; see \cite{matousek97}, \cite{goaoc-patak-patakova-tancer-wagner17} or \cite{patakova2019}.

Corollary~\ref{cor:helly} is an analogy of Theorem 2 in~\cite{matousek97} or Theorem~1
in~\cite{goaoc-patak-patakova-tancer-wagner17} for manifolds, with stronger
assumption on intersections. This is already interesting for manifolds, as the 
optimal Helly number is linked to the solution of the K\"{u}hnel problem.
The proof of Theorem~\ref{thm:radon} (modulo
Theorem~\ref{t:kuhnel_improved}) follows by a combination of a suitable definition of Radon number~\cite{patakova2019} and techniques developed in \cite{matousek97,
goaoc-patak-patakova-tancer-wagner17}. 
Because the proof is short, we reproduce
it immediately.

\begin{proof}[Proof of Theorem~\ref{thm:radon}]
For contradiction, we assume that 
$\operatorname{cl} P_1\cap \operatorname{cl} P_2$ is empty for every two disjoint subsets $P_1, P_2$ of $P$. 
Under this assumption,
  we will build an almost embedding $f\colon \Delta^{(k)}_{r-1} \to M$. This
  contradicts Theorem~\ref{t:kuhnel_improved}(i) in case (i) and
  Theorem~\ref{t:kuhnel_improved}(ii) in case (ii).    
  We define $f$ inductively, skeleton by skeleton.
  We start by letting the points $P$ be the $0$-skeleton of $\Delta_{r-1}$.
During the construction, we maintain the following property: If $\sigma$ is a
  simplex of dimension at most $k$ and $I$ is the set of vertices of $\sigma$,
  then $f$ maps $\sigma$ into $\operatorname{cl}(I)$.

  Now, given a simplex $\sigma$ of dimension at most $k$, assume that $f$ is  already defined on $\partial
  \sigma$. Due to the property we maintain, we get that $f(\partial \sigma)$
  belongs to $\operatorname{cl}I$, where $I$ is the set of vertices of $\sigma$.
  As $\operatorname{cl}I$ is $k$-connected, we can extend $f$ to $\sigma$ inside
  $\operatorname{cl}I$, thus we maintain the required property.
  It remains to show that the resulting $f$ is an almost embedding of
  $\Delta_{r-1}^k$ into $M$. Given disjoint $k$-simplices $\sigma$ and $\tau$
  of $\Delta_{r-1}$, let $I$ be set of vertices of $\sigma$ and $J$ be the set
  of vertices of $\tau$. In particular $I$ and $J$ are disjoint. But then $f(\tau)$ lies in $\operatorname{cl} I$ and $f(\sigma)$ lies in $\operatorname{cl} J$ and these two sets are disjoint by our assumption.
\end{proof}

\subsection{Obstruction for embeddability}
\label{ss:obstruction}
Now we describe an obstruction for embeddability of a $k$-complex into a
compact PL $2k$-manifold,
which is our main technical tool. 
In general, we follow~\cite{shapiro57,freedman-krushkal-teichner94, johnson02, skopenkov08,
melikhov09} and \cite[App. D]{matousek-tancer-wagner11}; however the
concrete interpretations of the van Kampen obstruction in these references
somewhat vary.
We choose to specify the details in a way convenient for working
with intersection form later on. We postpone the precise definition of a
general position map, intersection number and intersection form to
Section~\ref{s:prelim} as they are not so essential for understanding this text
in the introduction.

\subparagraph{The standard van Kampen obstruction.} 
Let $k \geq 1$ and $K$ be a simplicial $k$-complex.
Let $f\colon |K| \to
\R^{2k}$ be a general position map. Given two disjoint $k$-simplices $\sigma$
and $\tau$ of $K$, the number of intersections $f(\sigma)$ and $f(\tau)$ is
finite and each such intersection is transversal. One way how to express the
idea of the van Kampen obstruction~\cite{vankampen32} is the following: Let
$\Z_2^P$ be the vector space over $\Z_2$ whose coordinates are indexed by the
set $P$ of all (ordered) pairs $(\sigma, \tau)$ of disjoint $k$-simplices of
$K$. The general position map $f$ induces a vector $v_f \in \Z_2^P$ such that
its coordinate corresponding to the pair $(\sigma, \tau)$ is the number of
intersections between $f(\sigma)$ and $f(\tau)$ modulo 2. It turns out that the
vectors $v_f$ when considering over all possible general maps $f$ form an
affine subspace $A$ of $\Z_2^P$. In particular, if there is an embedding $g$ of $K$
into $\R^{2k}$, this affine subspace $A$ has to contain the zero vector $v_g$.
For concrete $K$, it is possible to determine whether $A$ contains the
zero vector, and $A$ is essentially the object that we will call the van Kampen
obstruction (see below).

For practical purposes (computations), it is convenient to consider $A$ as a
certain cohomology class which we will overview below. In particular, $P$ will
be replaced with deleted product of $K$; $v_f$ with corresponding intersection
cochain and $A$ with certain cohomology class denoted $\oo(K)$. In addition,
the similar ideas as above may be performed over the integers $\Z$ instead of
$\Z_2$. The cost is that one has to consider intersections of $f(\sigma)$ and
$f(\tau)$ carefully with signs but the benefit is that the integer valued 
obstruction is complete for $k \neq 2$.

From now on we perform all our considerations in a ring $R = \Z_2$ or $R =
\Z$. All the orientation considerations can be skipped if $R = \Z_2$. (This specifically applies in the proof of Theorem~\ref{t:kuhnel_improved} as the $\Z_2$-version of the obstruction is fully sufficient there.)
Let
$\tilde K := \{\sigma \times \tau\colon \sigma, \tau \in K, \sigma \cap \tau = \emptyset\}$
denote the \emph{deleted product} of $K$. We fix an orientation of every
simplex of $K$. This induces an orientation of the cells $\sigma \times \tau
\in \tilde K$ by the product orientation.\footnote{If $(u_1, \dots, u_p)$ is a
positive basis of $\sigma$ and $(v_1, \dots, v_q)$ is a positive basis of
$\tau$, then $((u_1,0), \dots, (u_p,0), (0,v_1), \dots, (0,v_q))$ is a positive basis of
$\sigma \times \tau$.} By $C_m(\tilde K; R)$ we denote the group of
$m$-chains in $\tilde K$ (for some integer $m$).\footnote{We work with cellular homology, thus 
the group $C_{m}(\tilde K; R)$ should be understood as
$H_{m}(\tilde K^{(m)}, \tilde K^{(m-1)}, R)$ and $\sigma \times \tau$ should be
understood as an oriented generator corresponding to the cell $\sigma \times
\tau$ with $\dim \sigma + \dim \tau = m$. The symbol $\tilde K^{(i)}$
stands for $i$-skeleton of $\tilde K$.} 
This essentially means that $C_m(\tilde K; R)$ is the group of
formal $R$-combinations of products $\sigma \times \tau$ with the
fixed orientation as above. The boundary operator on $C_{m}(\tilde K;
R)$ is given by
\begin{equation}
  \partial (\sigma \times \tau) = (\partial \sigma) \times \tau + (-1)^{\dim
  \sigma} \sigma \times (\partial \tau).
\end{equation}
By $C^{m}(\tilde K; R)$ we denote the group of $m$-cochains in $\tilde K$.
These are homomorphisms from $C_{m}(\tilde K; R)$ to $R$. The coboundary
operator, dual to the boundary operator, is given by
\begin{equation}
  \delta \xi (\sigma \times \tau) = \xi((\partial \sigma) \times \tau) + (-1)^{\dim
  \sigma} \xi(\sigma \times (\partial \tau))
\end{equation}
for $\xi \in C^{m}(\tilde K; R)$.
We will need (only in dimension $2k$) a subgroup $C^{2k}_{\alts}(\tilde K; R)$ of $C^{2k}(\tilde
K; R)$ consisting of cochains $\xi \in C^{2k}(\tilde
K; R)$ satisfying
$$
\xi(\sigma \times \tau) = (-1)^k\xi(\tau \times \sigma).
$$
We will also need (only in dimension $2k-1$) a subgroup $C^{2k-1}_{\sym}(\tilde
K; R)$ of $C^{2k-1}(\tilde
K; R)$ consisting of cochains $\xi \in C^{2k-1}(\tilde
K; R)$ satisfying
$$
\xi(\sigma \times \tau) = \xi(\tau \times \sigma).
$$
We call $\xi \in C^{2k}_{\alts}(\tilde K; R)$ \emph{alternately-symmetric}
and $\xi \in C^{2k-1}_{\sym}(\tilde
K; R)$ \emph{symmetric}. A simple computation reveals that $\partial \xi \in
C^{2k-1}_{\sym}(\tilde
K; R)$ for $\xi \in C^{2k}_{\alts}(\tilde K; R)$ and vice versa $\delta \xi \in
C^{2k}_{\alts}(\tilde K; R)$ for $\xi \in C^{2k-1}_{\sym}(\tilde
K; R)$. Then the cohomology group $H^{2k}_{\alts}(\tilde K; R)$ is defined in
the standard way as $H^{2k}_{\alts}(\tilde K; R) = \ker \delta_{2k} / \im
\delta_{2k-1}$ with respect to the coboundary operator 
\begin{equation}
  \label{e:coboundary_operators}
  C^{2k-1}_{\sym}(\tilde K; R) \xrightarrow{\delta_{2k-1}}
C^{2k}_{\alts}(\tilde K; R)\xrightarrow{\delta_{2k}} 0.
\end{equation}
The operator $\delta_{2k}$ is in particular
trivial, thus $\ker \delta_{2k} = C^{2k}_{\alts}(\tilde K; R)$.

Given a general position map $f\colon |K| \to \R^{2k}$, we have the
\emph{intersection cochain} $\vartheta_f \in C_{2k}(\tilde K; R)$ given so
that $\vartheta_f(\sigma \times \tau)$ is the intersection number of
$f(\sigma)$
and $f(\tau)$. (The details are postponed to Section~\ref{s:prelim}.
Intuitively, the intersection number is the number of intersections between
$f(\sigma)$ and $f(\tau)$; however, if $R = \Z$, then the intersections have to
be counted carefully with signs.) This cochain satisfies 
$\vartheta_f(\sigma \times \tau) = (-1)^k \vartheta_f(\tau \times \sigma)$;
therefore it belongs to $C^{2k}_{\alts}(\tilde K; R)$. It turns out that the
cohomology class $[\vartheta_f] \in H^{2k}_{\alts}(\tilde K; R)$ is independent
of the choice of $f$. This class (for arbitrary $f$) is called the \emph{van Kampen obstruction}
for embeddability of $K$ into $\R^{2k}$ and we will denote it $\oo(K)$. If $f$
is an embedding, then $\vartheta_f = 0$ which also implies that $\oo(K) =
[\vartheta_f] = 0$.  Thus $\oo(K)$ is indeed an obstruction for embeddability
of $K$ into $\R^{2k}$.

\paragraph{The obstruction in a manifold.}
Now let us in addition assume that $M$ is a compact PL $2k$-manifold. Let
us also assume that $M$ is $R$-orientable---this condition is vacuous if $R =
\Z_2$ while this is the standard orientability if $R = \Z$.
By $\Omega\colon H_k(M;R) \times H_k(M;R) \to R$ we denote
the $R$-intersection form on $M$.\footnote{It would be more appropriate
to use the notation $\Omega_R$ instead of $\Omega$ consistently with the
previous subsection. However, from now on, we want to simplify the notation for
$\Omega$.}
If $R = \Z_2$ the properties of the form were sketched in the previous
subsection and they are analogous for $R=\Z$. In general, $\Omega$ is
again alternately-symmetric, that is, $\Omega(h,h') = (-1)^k\Omega(h',h)$; this is of course the same as symmetric if $R = \Z_2$.
Given a homomorphism $\psi \colon C_k(K; R) \to H_k(M; R)$, we define $\omega_\psi
\in C^{2k}_{\alts}(\tilde K)$ by $\omega_\psi(\sigma \times \tau) :=
\Omega(\psi(\sigma), \psi(\tau))$. By alternating symmetry of $\Omega$ we get that
$\omega_\psi$ is indeed an alternately symmetric cochain.

\begin{theorem}[Existence of the obstruction\footnote{As our obstruction is parametrized by homomorphisms
  $\psi\colon C_k(K;R) \to H_k(M; R)$ we have been asked whether
  Theorem~\ref{t:obstruction} can be equivalently stated in (co)homological
  invariants instead of (co)chains. This is indeed possible:
    A homomorphism $\psi\colon C_k(K; R) \to H_k(M;R)$ is an element of the
      cochain group $C^k(K; H_k(M;R))$. Each such element is a cocyle because
      $K$ is $k$-dimensional. It can be computed that $[\omega_\psi]$ is independent
      of the choice of representative $\psi$ of a cohomology class in $H^k(K;
      H_k(M;R))$; thus $[\omega_\psi]$ could be defined only with respect to
      such a cohomology class. 
  }]
\label{t:obstruction}

  Let $k\geq 1$, $R = \Z$ or $R = \Z_2$, $K$ be a $k$-complex, and $M$ be a
  compact $R$-orientable PL $2k$-manifold.
  Assume that
  there is an almost embedding $f \colon |K| \to M$. Assume also that
  the restriction of $f$ to the $(k-1)$-skeleton $K^{(k-1)}$ is nullhomotopic.
Then there is a homomorphism $\psi\colon C_k(K; R) \to H_k(M;R)$ such that
\[ [\omega_\psi] - \oo(K) = 0.\]
\end{theorem}

First, let us remark that the extra assumption that the restriction of $f$ to
the $(k-1)$-skeleton $K^{(k-1)}$ is nullhomotopic is always satisfied in two
important cases: if either $M$ or $K$ is $(k-1)$-connected. In
particular, this occurs if $K := \Delta_n^{(k)}$ is the $k$-skeleton of an
$n$-simplex. The latter one we use in the proof of
Theorem~\ref{t:kuhnel_improved} and a reader interested only in the proof of
Theorem~\ref{t:kuhnel_improved} and willing to accept
Theorem~\ref{t:obstruction} as a blackbox may immediately jump to
Section~\ref{s:kuhnel}.

With slight abuse of terminology, we can consider non-existence of a homomorphism
$\psi$ from the theorem as an \emph{obstruction for (almost) embeddability of $K$ to
$M$}, and we say that this obstruction \emph{vanishes} if such homomorphism
exists. 

\begin{remarks}
  \label{r:obstruction}

  \begin{enumerate}[(a)]
  \item If $\Omega$ is trivial, then $\psi$ must be a trivial homomorphism,
    thus our obstruction coincides with the standard van Kampen obstruction.
  \item The minus sign at $\oo(K)$ in the statement is not important as the van
    Kampen obstruction is an element of order $2$, $\oo(K)  = - \oo(K)$.
  \item We will show that our obstruction is `quadratic' in a sense that it
    vanishes if and only if certain system of quadratic equations has a
    solution; see Theorem~\ref{t:quadratic}. 
  \item Given a map $f\colon |K| \to M$, there are several ways how to
    describe an obstruction, depending on $f$, for existence of a homotopy from
      $f$ to an embedding:
      \begin{enumerate}[1.]
	\item
	  A necessary condition for existence of such homotopy
      is existence of an equivariant homotopy from $f^2 \colon |K|^2 \to M^2$
      to so called isovariant map; see Harris~\cite{harris69} for details. If
      $K$ is a $k$-complex and $M$ is an $m$-manifold and $3k \leq 2m-3$ (in
      particular if $m = 2k$ and $k \geq 3$), then
      this is even `if and only if' condition; see~\cite[Theorem~1]{harris69}.
      This gives rise to obstruction theories in this setting; see Corollary~6 and
      Corollary~8 in~\cite{harris69}. From this point of view, some description
      of an obstruction for embedding $k$-complexes into $2k$-manifolds is not
      new. However, the added value of Theorem~\ref{t:obstruction} is that it
      provides quite concrete description for \emph{all} maps $f\colon |K| \to
      M$ suitable for applications.

    \item A more explicit description of such obstruction appears in a work of Johnson~\cite{johnson02}
	  (in the setting when $K$ is a $k$-complex and $M$ is a
	  $2k$-manifold).
      There are some mild differences in the assumptions on $M$. In
      particular, Johnson works in the smooth case. However, Johnson's
      setting is overall closer to our setting than Harris' setting because he
      essentially works with the van Kampen obstruction. When adapted to
      our notation, Johnson's obstruction is a class in $H^{2k}_{\alts}(\tilde
      K; \Z)$. However, it does not seem that Johnson's approach answers which
	  class is it. We in principle provide this answer (see the proof
	  of Proposition~\ref{p:transfer}) as an intermediate step in a proof of
	  Theorem~\ref{t:obstruction}, though we need to assume the nullhomotopy
	  condition as in Theorem~\ref{t:obstruction}.
      \end{enumerate}
  \end{enumerate} \end{remarks}

As a counterpart to Theorem~\ref{t:obstruction}, using the standard tools, we will show that our obstruction is complete, if $k \geq 3$ and $M$ is
$(k-1)$-connected. (We will mainly follow~\cite{freedman-krushkal-teichner94}
but similar ideas go back at least to Whitney~\cite{whitney44}, Shapiro~\cite{shapiro57} and
Wu~\cite{wu65}.)

\begin{theorem}[Completeness of the obstruction]
\label{t:complete}
  Let $k \geq 3$, $K$ be a $k$-complex, $M$ be a compact $(k-1)$-connected
  (in particular orientable) PL
  $2k$-manifold. Assume that there is a homomorphism $\psi\colon C_k(K; \Z) \to
  H_k(M;\Z)$ such that $[\omega_\psi] - \oo(K) = 0$ (over the integers), 
  that is, the obstruction vanishes. Then there is a PL embedding $f \colon |K| \to M$.
\end{theorem}

\paragraph{The K\"{u}hnel problem revisited.}
Theorem~\ref{t:complete} can be used to transfer the solution to Kühnel's
problem from one manifold to another, as we discuss now.
Another case, where Theorem~\ref{t:complete} could be useful are the
computational aspects that we will discuss in next subsection. In particular,
in both these cases, the consideration of the obstruction over the integers is unavoidable.

In the proof of Theorem~\ref{thm:radon}, it was crucial that Theorem~\ref{t:kuhnel_improved}
holds for almost embeddings (and not only for embeddings). However, our
approach allows, under mild conditions on the manifold, to extend an
upper bound on the K\"{u}hnel problem from embeddings to almost embeddings.
This would be in particular interesting, if it were possible to remove the
additional assumption on the embeddings in Adiprasito's proof of the K\"{u}hnel
bound (mentioned early in the introduction).

\begin{proposition}
  \label{p:isomorphic_forms}
  Assume that $k \geq 3$, $M$ is a compact orientable PL $2k$-manifold,
  and $M'$ is a compact $(k-1)$-connected orientable PL $2k$-manifold such that $M$ and $M'$ have isomorphic
intersection forms over the integers. If $\Delta_n^{(k)}$ (topologically) almost embeds into $M$,
then $\Delta_n^{(k)}$ PL embeds into $M'$. 
\end{proposition}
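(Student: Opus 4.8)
The plan is to chain together the theorems already stated in the introduction. Suppose $\Delta_n^{(k)}$ topologically almost embeds into $M$. Since $M$ is an orientable PL $2k$-manifold (in particular closed, as $M$ has a finite intersection form over $\Z$ and the proposition speaks of its Betti numbers implicitly through Theorem~\ref{t:kuhnel_improved}; if $M = \R^{2k}$ the statement is vacuous/trivial), we may apply Theorem~\ref{t:obstruction} with $R = \Z$: the condition (H) holds here because $K = \Delta_n^{(k)}$, which is exactly one of the two cases covered by Corollary~\ref{c:H_OK}. Hence there is a homomorphism $\psi \colon C_k(\Delta_n^{(k)}; \Z) \to H_k(M; \Z)$ with $[\omega_\psi]_F - \tilde\oo(\Delta_n^{(k)}) = 0$, where $\omega_\psi(\sigma \times \tau) = \Omega_M(\psi(\sigma), \psi(\tau))$ and $\Omega_M$ is the intersection form of $M$.

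Next I would transport this solution across the isomorphism of intersection forms. Let $\Phi \colon H_k(M;\Z) \to H_k(M';\Z)$ be an isomorphism with $\Omega_{M'}(\Phi(a), \Phi(b)) = \Omega_M(a,b)$ for all $a,b$. Put $\psi' := \Phi \circ \psi \colon C_k(\Delta_n^{(k)};\Z) \to H_k(M';\Z)$. Then for every product cell $\sigma \times \tau$ of the deleted product we have $\omega_{\psi'}(\sigma\times\tau) = \Omega_{M'}(\psi'(\sigma), \psi'(\tau)) = \Omega_{M'}(\Phi\psi(\sigma), \Phi\psi(\tau)) = \Omega_M(\psi(\sigma),\psi(\tau)) = \omega_\psi(\sigma\times\tau)$, so $\omega_{\psi'} = \omega_\psi$ as skew-symmetric cochains. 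The subgroup $F$ and the van Kampen obstruction $\tilde\oo(\Delta_n^{(k)})$ live purely in $C^{2k}_{\skw}(\widetilde{\Delta_n^{(k)}};\Z)$ and do not depend on the target manifold at all (this is emphasized right after the statement of Theorem~\ref{t:obstruction}). Therefore $[\omega_{\psi'}]_F - \tilde\oo(\Delta_n^{(k)}) = [\omega_\psi]_F - \tilde\oo(\Delta_n^{(k)}) = 0$, i.e.\ the obstruction for embedding $\Delta_n^{(k)}$ into $M'$ vanishes.

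Finally, since $k \geq 3$ and $M'$ is closed, $(k-1)$-connected and PL, Theorem~\ref{t:complete} applies and yields a PL embedding $f \colon |\Delta_n^{(k)}| \to M'$, which is exactly the conclusion. The orientability of $M$ and $M'$ is needed only to make "isomorphic intersection forms over $\Z$" the right notion — over a non-orientable manifold one would have to be careful with twisted coefficients and the intersection pairing need not be the unimodular object one wants to match — but in the argument above it is used only implicitly, through the hypothesis that $\Phi$ is an isometry of the honest $\Z$-valued forms.

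The main obstacle, and the only place where any real content beyond bookkeeping enters, is verifying that the quantities $F$ and $\tilde\oo(\Delta_n^{(k)})$ truly are manifold-independent, so that the equation $[\omega_\psi]_F - \tilde\oo = 0$ transfers verbatim; once the definitions in Section~\ref{s:prelim} are in place this is immediate from the construction, but it is the hinge of the proof. A secondary point to check is that $\psi'$ is a legitimate chain-level homomorphism (it is, being a composition of the given $\psi$ with a group isomorphism) and that passing to the quotient by $F$ commutes with the identification $\omega_{\psi'} = \omega_\psi$ — again automatic. Everything else is a direct invocation of Corollary~\ref{c:H_OK} (for (H) and the forward direction) and Theorem~\ref{t:complete} (for the backward direction), so no new topology is required.
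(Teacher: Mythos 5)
Your proof is correct and follows essentially the same route as the paper's: invoke Corollary~\ref{c:H_OK} (with condition (H) satisfied since $K=\Delta_n^{(k)}$) to obtain $\psi$, transport across the isomorphism of intersection forms to obtain $\psi'$ with $[\omega_{\psi'}]_F - \tilde\oo(\Delta_n^{(k)}) = 0$, and conclude by Theorem~\ref{t:complete}. You spell out the transport step and the manifold-independence of $F$ and $\tilde\oo$ more explicitly than the paper does (which is a reasonable expansion), though note that your letter $\Phi$ for the form isomorphism clashes with the $\Phi$ already reserved in Section~\ref{s:prelim}.
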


\begin{proof}
  Given an embedding of $\Delta_n^{(k)}$ to $M$,
  Theorem~\ref{t:obstruction} implies
  that there is a homomorphism $\psi \colon C_k(\Delta_n^{(k)}; \Z) \to H_k(M;
  \Z)$ such that $[\omega_\psi] - \oo(\Delta_n^{(k)}) = 0$. As the intersection
  forms of $M$ and $M'$ are isomorphic, there is also a homomorphism $\psi'
  \colon C_k(\Delta_n^{(k)}; \Z) \to H_k(M'; \Z)$ such that $[\omega_{\psi'}] -
  \oo(\Delta_n^{(k)}) = 0$. Therefore, we get the required PL embedding into $M'$ from
  Theorem~\ref{t:complete}.
\end{proof}

The intersection form on $M$ is in particular very simple if $k$ is odd
and $M$ is closed. The former property implies that the form is 
antisymmetric; the latter property implies that it is unimodular (after
factoring out the torsion)~\cite[Subsection~2.7]{prasolov07}.
Therefore for a suitable choice of the basis of
$H_k(M;\Z)$ (after factoring out the torsion) it can be represented by a block-diagonal matrix where each block is of the form 
$\begin{pmatrix}
0 & 1 \\
-1 & 0 \\
\end{pmatrix}$; this is a simple exercise (and probably a well known fact).
The
explicit reference containing the proof we were able to find are the online
lecture notes~\cite[Claim~2.1, Lecture~7]{morgan18_ln}. On the other hand, the
block diagonal matrix with $b$ blocks $\begin{pmatrix}
  0 & 1 \\
  -1 & 0 \\
\end{pmatrix}$ is the matrix of the intersection form of the connected sum of
$b$ copies of $S^k \times S^k$, which is $(k-1)$-connected. If we take this connected sum as $M'$, Proposition~\ref{p:isomorphic_forms} gives the following corollary.

\begin{corollary}
\label{c:connected_sum}
  Assume that $k \geq 3$ is odd and assume that $\Delta_n^{(k)}$
  (topologically) almost embeds into a closed orientable PL $2k$-manifold $M$, then
  $\Delta_n^{(k)}$ PL embeds into the connected sum 
  $(S^k \times S^k) \# \cdots \# (S^k \times S^k)$ of $\beta_k(M;\Z)$ copies of $S^k
  \times S^k$.\qed
\end{corollary}

In other words, Corollary~\ref{c:connected_sum} says that it we want to
solve the orientable variant of the K\"{u}hnel problem for $k$ odd, it is
sufficient to solve it in the very special cases for $M = (S^k \times S^k) \# \cdots
\# (S^k \times S^k)$.

\subsection{Computational aspects}
Part of our motivation for introducing the
obstruction for embeddability of $K$ into $M$ was to understand an analogue of
algorithmic embeddability question from~\cite{matousek-tancer-wagner11}, when
the target space is $M$ (instead of Euclidean space as
in~\cite{matousek-tancer-wagner11}). For this, let $\emb(k,M)$, for fixed
$k$ and $M$ denote the computational problem which asks whether a $k$-complex $K$ on
input is embeddable into $M$.

\begin{question}
\label{q:e_decidable}
  For which $2k$-manifolds is $\emb(k,M)$ decidable?
\end{question}

This problem of course makes sense even without the 
assumption that $\dim M = 2k$ but we will stay in the world of $2k$-manifolds as
this is the first nontrivial case. 
As mentioned early in this section, $\emb(k, \R^{2k})$ is decidable, even
polynomial time solvable, for $k \neq 2$. Also, if $k = 1$ and $M$ is an arbitrary (closed) surface,
then $\emb(1,M)$ is decidable, even linear time solvable~\cite{mohar99,
kawarabayashi-mohar-reed08}. If $k =2$, decidability of $\emb(k, \R^{2k})$ is
unknown.

For $k\geq 3$, our approach allows us to reformulate each instance of this
problem as a certain, very special, system of quadratic Diophantine equations.
This follows from
Theorems~\ref{t:obstruction},~\ref{t:complete},~and~\ref{t:quadratic} (stated
later on).
 Unfortunately, it is in general undecidable to determine whether a system of quadratic equations over integers 
has a solution~\cite{matijasevic70}. However, the system of
equations coming from Theorem~\ref{t:quadratic} is somewhat special and we suspect that it can be solved algorithmically for sufficiently nice $M$. In any
case, the reformulation of Question~\ref{q:e_decidable} via
Theorem~\ref{t:quadratic} allows to try new tools when answering
Question~\ref{q:e_decidable}.

On the other hand, if we consider the same system of quadratic equations over
$\Z_2$, then solvability of such a system is decidable (in worst
case by trying all options). 
This reflects in decidability stated in the following theorem.
The properties of maps stated in the theorem are 
generalizations of even drawings and independently even drawings of
graphs~\cite{pach-toth04,fulek-kyncl-malinovic-palvolgyi15,
fulek-kyncl19}.

\begin{theorem}
\label{t:Z2_computable}
  Let us assume that $k \geq 3$ and $M$ is a compact $(k-1)$-connected PL manifold. 
  Then,
  it is algorithmically decidable to determine whether a given $k$-complex $K$
  admits
  \begin{enumerate}[$(i)$]  
\item
  a general position map $f\colon |K|
  \to M$ such that whenever $\sigma$ and $\tau$ are disjoint $k$-simplices of
  $K$, then $f(\sigma)$ and $f(\tau)$ intersect an even number of times;
\item

  a general position map $f\colon |K|
      \to M$ such that whenever $\sigma$ and $\tau$ are $k$-simplices
      of
        $K$, then $f(\sigma)$ and $f(\tau)$ intersect an even number of times.
  \end{enumerate}
\end{theorem}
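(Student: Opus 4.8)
The plan is to reduce each of the two decision problems to the solvability of a finite system of polynomial equations over $\Z_2$, and then to observe that such a reduction is effective and that solvability of such a system is decidable (in the worst case by exhaustive search, since $\Z_2$ is finite and the number of unknowns is finite for each input $K$).

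Consider part~$(i)$ first. The key step is to prove the equivalence: a $k$-complex $K$ admits a general position map $f\colon|K|\to M$ such that $f(\sigma)$ and $f(\tau)$ meet an even number of times for every pair of disjoint $k$-simplices $\sigma,\tau$ if and only if there is a homomorphism $\psi\colon C_k(K;\Z_2)\to H_k(M;\Z_2)$ with $[\omega_\psi]_F-\tilde\oo(K)=0$ over $\Z_2$. For the ``only if'' direction I would re-examine the proof of Theorem~\ref{t:obstruction} in the case $R=\Z_2$: that argument uses the almost-embedding hypothesis only through the vanishing modulo~$2$ of the intersection numbers of disjoint $k$-simplices, hence it goes through verbatim for an ``independently even'' map, and condition~(H) is available because $M$ is $(k-1)$-connected, so $\psi$ is produced on all of $C_k(K;\Z_2)$ (compare Lemma~\ref{l:omega} and Theorem~\ref{t:transfer}). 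For the ``if'' direction I would use the $\Z_2$-reduction of the construction underlying Theorem~\ref{t:complete}: starting from any general position map $f_0$, its intersection cochain represents $[\omega_{\psi_{f_0}}]_F-\tilde\oo(K)$; modifying $f_0$ on the $k$-simplices by pushing their images around cycles representing classes of $H_k(M;\Z_2)$ replaces $\psi_{f_0}$ by an arbitrary prescribed $\psi$ and changes the cochain accordingly, and the remaining difference, which lies in $F$, is then removed by the elementary homotopies (finger moves) accounted for by $F$ (this is where $k\ge3$ and $(k-1)$-connectivity are used, exactly as in the proof of Theorem~\ref{t:complete}). Granting the equivalence, Theorem~\ref{t:quadratic} expresses the condition on $\psi$ as a finite system of $\Z_2$-quadratic equations: the coordinates of $\psi$ on the $k$-simplices are the unknowns, $\omega_\psi(\sigma\times\tau)=\Omega(\psi(\sigma),\psi(\tau))$ is quadratic in them, and the membership $\omega_\psi-\tilde\oo(K)\in F$ unwinds to finitely many equations. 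Since $\tilde\oo(K)$ is computable (see~\cite{matousek-tancer-wagner11}), the subgroup $F$ is explicitly described in Section~\ref{s:prelim}, and $\Omega$ and $H_k(M;\Z_2)$ are fixed and computable, the system is effectively constructible; exhaustive search over all $\psi$ then decides whether it is solvable.

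For part~$(ii)$ the target property is the stronger ``even map'' condition, in which every ordered pair $(\sigma,\tau)$ of $k$-simplices contributes evenly, including adjacent pairs and the diagonal pairs $\sigma=\tau$. I would run the same analysis on the full product $K\times K$ in place of the deleted product $\tilde K$: a general position map yields a $\Z_2$-cochain on $K\times K$ counting, for each ordered pair, the double points of $f$ in the relative interiors of $\sigma$ and $\tau$; off the diagonal this splits into the van Kampen part and the quadratic part $\omega_\psi$ as before, while on the diagonal there is an additional self-intersection contribution of each $k$-simplex, which is controlled, modulo~$2$, by linear data attached to the $(k-1)$-skeleton (framings / normal Euler class). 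Repeating the forward and backward arguments of part~$(i)$ with these extra terms shows that an even map exists if and only if a correspondingly enlarged system of $\Z_2$-linear and $\Z_2$-quadratic equations---in the unknowns $\psi$ together with finitely many extra unknowns recording the self-intersection parities---is solvable, and this system is again effectively constructible with finitely many unknowns, hence decidable.

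The main obstacle is the realization (``if'') direction of the two equivalences: one has to verify that every element of $F$, and in part~$(ii)$ every admissible self-intersection pattern, can actually be produced by an explicit finite sequence of local modifications of a general position map into $M$ --- the mod-$2$ counterpart of the geometric input of Theorem~\ref{t:complete}, which is where the hypotheses $k\ge3$ and $(k-1)$-connectedness are genuinely needed. The ``only if'' directions, and the translation of the conditions into polynomial equations over $\Z_2$, are a mod-$2$ rerun of the machinery already set up for Theorems~\ref{t:obstruction} and~\ref{t:quadratic}; once the equivalences are in place, decidability is immediate because $\Z_2$ is finite.
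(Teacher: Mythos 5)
Your treatment of part $(i)$ matches the paper's route: you correctly identify that the condition ``$f(\sigma)$ and $f(\tau)$ meet evenly for disjoint $\sigma,\tau$'' is equivalent to $\vartheta_f=0$ over $\Z_2$, run the transfer argument (Lemma~\ref{l:omega}, Theorem~\ref{t:transfer}, Corollary~\ref{c:transfer}) in one direction and the mod-$2$ version of Steps~1--2 of Theorem~\ref{t:complete} in the other, and finish with Theorem~\ref{t:quadratic} and exhaustive search over $\Z_2$. This is exactly what the paper packages into Proposition~\ref{p:independently_even}$(i)\Leftrightarrow(ii)$.

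For part $(ii)$, however, you diverge from the paper and the divergence is where the proposal has a real gap. You set up a separate, enlarged system on the full product $K\times K$, attaching extra unknowns to diagonal and adjacent pairs (``self-intersection parities,'' framings, normal Euler class), and assert that an ``even map'' exists iff that enlarged system is solvable. But you have not specified that system, and you have not shown that its solvability in fact characterizes the stronger condition; the intersection parities on adjacent and diagonal pairs are not invariants of the homotopy class (they change under isotopies near the $(k-1)$-skeleton), so it is far from clear what relations the extra unknowns should satisfy. The paper avoids this entirely: Proposition~\ref{p:independently_even} shows that, for $k\geq 3$ and $M$ $(k-1)$-connected, conditions $(ii)$ and $(iii)$ of that proposition --- which are exactly parts $(i)$ and $(ii)$ of the theorem you are proving --- are equivalent. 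The implication $(iii)\Rightarrow(ii)$ is trivial, and $(ii)\Rightarrow(iii)$ follows from Step~3 of the proof of Theorem~\ref{t:complete}, applying only those local moves (Whitney-trick-style, in a ball found via simple connectivity) that kill intersections of simplices sharing a vertex. Hence the same system~\eqref{e:quadratic} over $\Z_2$ decides both parts, and no new unknowns or equations are required. You should replace your part $(ii)$ argument with this observation: once an independently even map exists, it can always be upgraded to a fully even map, so decidability of $(ii)$ reduces to decidability of $(i)$.
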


Finally, if $M$ is compact PL and simply connected then it can be efficiently decided
whether a given map $|K| \to M$ is homotopic to an embedding. For details we
refer to Remark~\ref{r:homotopy_computable}. However, this remark is not very
new; this is essentially just Johnson's~\cite{johnson02} description of the
obstruction, though not stated this way explicitly. We add this remark for completeness.

\subparagraph{Organization.} In Section~\ref{s:prelim} we properly introduce
the intersection number and 
the intersection form. Then,
Theorem~\ref{t:obstruction} is proved in Section~\ref{s:transfer};
Theorem~\ref{t:kuhnel_improved} is proved in
Section~\ref{s:kuhnel}; and 
Theorems~\ref{t:complete} and~\ref{t:Z2_computable} are 
proved in
Section~\ref{s:complete}.
In Section~\ref{s:problems} we mention a few open
problems. 

\section{Preliminaries}
\label{s:prelim}
Throughout the paper, we work in the PL-category. In particular, all maps and
manifolds are PL, unless stated otherwise. Simplicial complexes are geometric
simplicial complexes, that is, triangulations of polyhedra as
in~\cite{rourke-sanderson72}.
We assume that $k \geq 1$ is an integer,
and $R$ is either the ring $\Z$ of integers or $\Z_2$. We assume that $M$ is $R$-orientable
compact (possibly with boundary) $2k$-manifold, unless explicitly stated otherwise.
($\Z$-orientability is the standard orientability,
$\Z_2$-orientability is vacuous.) In sequel `oriented' stays for $R$-oriented
and all orientation considerations should be skipped if $R = \Z_2$.
We also assume that $K$ is $k$-complex in which each simplex has a fixed
orientation but we
do not require any compatibility conditions for orientations of different
simplices. By $K^{(k-1)}$ we denote the $(k-1)$-skeleton of $K$.
The closed interval
$[0,1]$ is denoted $I$.

\subsection{Intersection number}\label{subs:intersection}
In the definitions of general position and intersection number below
apart from our conventions on $M$, we also allow $M= \R^{2k}$. This fills the
postponed details from Subsection~\ref{ss:obstruction}.

\subparagraph{General position.}
Let $f \colon |K| \to M$ be a map. We say that $f$ is a \emph{general position
map} if $f|_{|K^{(k-1)}|}$ is injective; there are only finitely many $x$ with more than one preimage; each such $x$ has exactly two preimages, which both lie in
$|K|\setminus|K^{(k-1)}|$, and the crossing of $f$ at $x$ is transversal. In
addition, if $M$ has nonempty boundary, we assume that\footnote{It would be
perhaps more natural to assume $f(|K|) \subseteq M\setminus \partial M$ for a
general position map. However, allowing the nonempty intersection of $\partial
M$ and the image of $K^{(k-1)}$ will be useful in one of the proofs.}
$f(|K|\setminus|K^{(k-1)}|) \subseteq M\setminus \partial M$. We will sometimes
need to perturb a map $f$ to a general position map $f'$ by a homotopy with a
support in an arbitrarily close neighborhood of $f(|K|)$. In such case we mean
to use Lemma~4.8 of~\cite{hudson69}.

Sometimes, we will need a mutually general position of two maps $f \colon |K| \to M$, $f' \colon |K'| \to M$ where
$K'$ is another $k$-complex. This will be equivalent with requiring that 
$f \sqcup f' \colon |K| \sqcup |K'| \to M$ is a general position map, where
`$\sqcup$' stands for disjoint union.

\subparagraph{Intersection number.} Let $f\colon |K| \to M$ and
$f' \colon |K'| \to M$ be maps. Let $\sigma \in K$, $\tau \in K'$ be two
$k$-simplices such that $f|_{\sigma} \sqcup f'|_{\tau}$ is  in general
position. Let $x \in M$ be an intersection point of $f(\sigma)$ and
$f'(\tau)$, 
that is, $x = f(y) = f'(y')$ for some $y \in
\sigma$ and $y' \in \tau$.
By general position, the intersection is transversal and
$y$ is in the interior of $\sigma$ and $y'$ is in the interior of $\tau$.
By $\sgn_{f, f'}(x)$ we denote the
\emph{sign} of this intersection: 

If $R = \Z_2$, then $\sgn_{f, f'}(x) =
1$.

If $R = \Z$, because the intersection of $f(\sigma)$ and $f'(\tau)$
is transversal at $x$, there is a neighborhood $N(x)$ of $x$ in $M$ and an
orientation preserving
PL-embedding $g \colon N(x) \to \R^{2k}$ such that both $g(f(\sigma) \cap
N(x))$ and $g(f'(\tau) \cap N(x))$ are flat. 
Considering the orientations of $\sigma$ and $\tau$ as a choice of positively
oriented bases, this gives positively oriented bases of (affine spans of) $g(f(\sigma) \cap
N(x))$ and $g(f'(\tau) \cap N(x))$. Then by concatenation, taking a positively
oriented basis of $g(f(\sigma) \cap N(x))$ first, we get an orientation of
$g(N(x))$.
We set $\sgn_{f, f'}(x) = 1$ if this orientation agrees with the
orientation of $M$ (after applying $g$) and $-1$ otherwise; see
Figure~\ref{f:sign} for an example. It turns out that $\sgn_{f, f'}(x) = (-1)^k\sgn_{f', f}(x)$.

\begin{figure}
\begin{center}
  \includegraphics{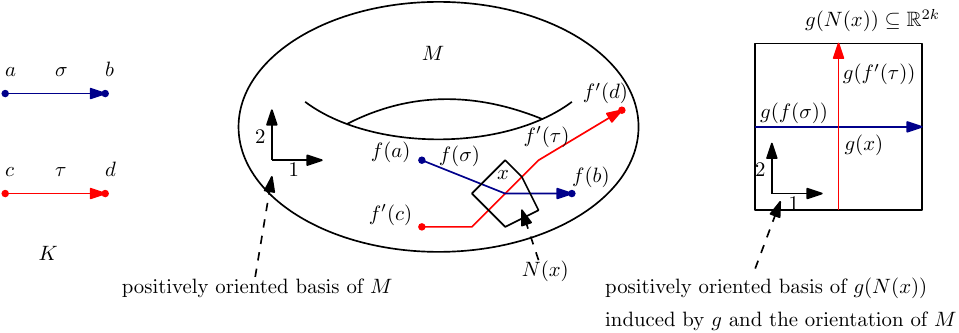}
\end{center}
  \caption{An example with $\sgn_{f,f'}(x) = +1$. Indeed,
  the basis $(g(f(\sigma) \cap N(x)), g(f'(\tau) \cap N(x)))$ gives the
  same orientation as the positively oriented basis of $g(N(x))$; in the
  picture $g(f(\sigma) \cap N(x))$ is simplified to $g(f(\sigma))$ and
  $g(f'(\tau) \cap N(x))$ to $g(f'(\tau))$.}
\label{f:sign}
\end{figure}

Next, the \emph{intersection number} of $f(\sigma)$ and
$f'(\tau)$ is defined as
\begin{equation}
\label{e:in}
  f(\sigma)\cdot f'(\tau) := \sum_x \sgn_{f, f'}(x)
\end{equation}
where the sum is over all $x$ obtained as intersection points of $f(\sigma)$
and $f'(\tau)$. Consequently,
\begin{equation}
\label{e:skew}
  f(\sigma)\cdot f'(\tau) = (-1)^k f'(\tau)\cdot f(\sigma).
\end{equation}

\subsection{Intersection form.}
\label{ss:if}

By $\Omega \colon H_k(M; R) \times H_k(M; R) \to R$
we denote the intersection form. Intuitively, given two cycles $z_1, z_2 \in
Z_k(M; R)$ in general position, the value $\Omega([z_1], [z_2])$ counts the
intersection number of these two cycles, which could be defined similarly as
for general position maps. 

\paragraph{Intersection form for closed manifolds.} Now, we temporarily assume that $M$ is closed.
In this case we refer to~\cite[Chapter 2, \S 2.7]{prasolov07} for precise definition; we use the dual form
$f^*$ in~\cite{prasolov07}. However, if $R = \Z$, we assume
that $\Omega$ is also defined on the torsion part of $H_k(M; R)$ and it
evaluates to $0$ there. (Prasolov~\cite{prasolov07} points out that the form
vanishes on the torsion part and he factors out the
torsion---then the form is nondegenerate.)
We will use the following properties of the intersection form:
\begin{enumerate}[$(i)$]
\item $\Omega$ is a bilinear form.
\item $\Omega$ is alternately-symmetric, that is, $\Omega(a,b) = (-1)^k \Omega(b,a)$.
\item $\Omega$ evaluates to $0$ on the torsion part of $H_k(M;R)$ if $R = \Z$.
\item Let $f\colon |K| \to M$ and $f' \colon |K'| \to M$ be maps
  such that $f \sqcup f'$ is in general position. Let $z \in Z_k(|K|;R), z' \in
  Z_k(|K'|;R)$, $z = \sum n_i
\sigma_i$, $z' = \sum n'_i \sigma'_i$ be two $k$-cycles,
where $n_i, n'_i \in R$ and $\sigma_i, \sigma'_i$
are all $k$-simplices of $K$ and $K'$ respectively. Then
\begin{equation}
\label{e:form}
\Omega(f_*([z]), f'_*([z'])) =
\sum\limits_{i,j} n_i n'_j f(\sigma_i)\cdot f'(\sigma'_j).
\end{equation}
\end{enumerate}

Property (i) follows immediately from the definition and (ii) is the contents of
Theorem 2.17(b) in~\cite{prasolov07}; (iii) is due to our convention. Finally,
(iv) comes from the definition of the intersection number in~\cite[Chapter 1,
\S 5.3]{prasolov07}. For getting formula~\eqref{e:form} we need that $z$ and
$z'$ are cycles in mutually dual cell decompositions of $M$ but this can be achieved
by considering sufficiently fine subdivision of $M$ and a perturbation of $z'$.
For other properties of the intersection form, we also refer
to~\cite{manifold_atlas_intersection_form}.

\paragraph{Intersection form for manifolds with nonempty boundary.}
As Prasolov points out~\cite[Remark below Thm.~2.17]{prasolov07}, the intersection form can be
also defined for manifolds with nonempty boundary. 
However, one has to be a bit careful because there are two natural ways how to do it, and in the case of manifolds with boundaries these two definitions are non-equivalent.
Here we provide a definition for which formula~\eqref{e:form} remains true.

Therefore now we assume that $M$ is compact with nonempty boundary. Let $M'$
be the double of $M$. (We take two copies of $M$ and we glue them together along their common
boundary). Let $\Omega_{M'}$ be the intersection form on $M'$ and we aim to
define the intersection form $\Omega = \Omega_M$ on $M$. Let $h_1, h_2$ be two
homology classes in $H_k(M; R)$. Let $z_1, z_2$ be $k$-cycles with $[z_1]_M =
h_1$ and $[z_2]_M=h_2$ where the subscript $M$ indicates that their homology class
is taken in $M$. (Analogously, we use subscript $M'$ if the homology class is
taken in $M'$). We define
\[
  \Omega_M(h_1, h_2) := \Omega_{M'}([z_1]_{M'}, [z_2]_{M'}).
\]
We note that $\Omega_M(h_1, h_2)$ is well defined because if $z$ and $z'$ are
homologous in $M$, then they are homologous in $M'$ as well. We also remark
that all the properties  (i), (ii), (iii), (iv) remain true for $M$ with
nonempty boundary---this can be easily checked because $M'$ satisfies them.

\section{Van Kampen obstruction in a manifold}
\label{s:transfer}

Throughout this section we have the same assumptions as in the beginning
of Section~\ref{s:prelim} regarding the notation $K$, $M$ and $R$.

Recall from the introduction that $\tilde K$ denotes the deleted product of $K$;
$C^{2k}_{\alts}(\tilde K; R)$ is the group of alternately-symmetric cochains;
$H^{2k}_{\alts}(\tilde K; R)$ is the corresponding cohomology group; and
$\oo(K) \in H^{2k}_{\alts}(\tilde K; R)$ is the van Kampen obstruction. Let us
also recall that $\oo(K) = [\vartheta_f]$ where $\vartheta_f$ is the
intersection cochain of an arbitrary general position map $f \colon |K| \to \R^{2k}$.

We generalize the intersecton cochain to maps with codomain $M$: Given a general position map $f \colon |K| \to M$ we define the
\emph{intersection cochain} for $f$ as $\vartheta_f \in C^{2k}_{\alts}(\tilde
K; R)$ via
$$
\vartheta_f(\sigma \times \tau) = f(\sigma) \cdot f(\tau).
$$
It follows from~\eqref{e:skew} that $\vartheta_f$ is alternately-symmetric as
required. We also define the \emph{van Kampen obstruction of the homotopy class
of $f$} as the cohomology class $\oo_f(K) := [\vartheta_f] \in
H^{2k}_{\alts}(\tilde K; R)$.

\begin{lemma}
\label{l:homotopic}
  Let $f, f'\colon |K| \to M$ be homotopic general position maps. Then
  $[\vartheta_f] = [\vartheta_{f'}]$. Equivalently, $\oo_f(K) := \oo_{f'}(K)$.
\end{lemma}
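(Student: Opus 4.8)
\textbf{Proof plan for Lemma~\ref{l:homotopic}.}

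The plan is to reduce a general homotopy between $f$ and $f'$ to a finite sequence of elementary moves, and track the effect of each move on the cochain $\vartheta$ modulo $F$. First I would invoke general position for homotopies: by a small perturbation of the homotopy $H\colon |K|\times I\to M$ (keeping the endpoints $f$ and $f'$ fixed since they are already in general position), I may assume $H$ is PL and in general position as a map of the $(k+1)$-complex $|K|\times I$ into the $(2k+1)$-manifold $M\times(\text{something})$ — more precisely I would view $H$ as a generic PL map $|K|\times I \to M$ so that the ``track'' of each pair of disjoint simplices $\sigma\times\tau$ is a generic $1$-parameter family. Then the set of parameter values $t\in I$ at which the crossing picture of $H_t$ changes is finite, and at each such value exactly one of finitely many codimension-one degeneracies occurs.

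The key step is the local analysis of these degeneracies. For a map whose restriction to the $(k-1)$-skeleton $L$ stays embedded and generic, the only way the crossing count $H_t(\sigma)\cdot H_t(\tau)$ (for disjoint $\sigma,\tau$) can change as $t$ passes a critical value is a finger move: a finger of some $k$-simplex $\mu$ is pushed across (or pulled back off) a $(k-1)$-simplex $\eta$ with $\eta\cap\mu=\emptyset$. (Two $k$-dimensional pieces cannot generically cross in a $1$-parameter family in a $2k$-manifold in a way that changes the count except through the $(k-1)$-dimensional boundary, which is where $\eta$ sits.) I would quote the ``Definition via finger-moves'' paragraph: such a move changes $\vartheta_{H_t}$ by exactly $\pm\varphi_{\eta,\mu}$, which lies in $F$ by definition of $F$ as the $R$-span of the finger move cochains. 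Summing over the finitely many critical values, $\vartheta_{f'}-\vartheta_f$ is an $R$-combination of finger move cochains, hence $[\vartheta_f]=[\vartheta_{f'}]$ in $C^{2k}_{\skw}(\tilde K;R)/F$. The equivalent statement $[\Phi(\vartheta_f)]=[\Phi(\vartheta_{f'})]$ then follows because $\Phi$ is a bijection carrying $F$ onto $B^{2k}(\bar K;R)$, as established in the ``Equivalence of the two definitions'' paragraph.

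The main obstacle is making the local move analysis rigorous: one must argue that a generic PL homotopy can be split so that, between consecutive critical parameters, the crossing picture is combinatorially constant, and that each transition is precisely a finger move (or an isotopy-type move that does not change $\vartheta$ at all, such as a Reidemeister-II-like cancellation of two oppositely-signed crossings, which changes the count by $0$ in $R$). I would handle this by following the model of~\cite{freedman-krushkal-teichner94}, decomposing the homotopy into a composition of ambient isotopies (which fix $\vartheta$ exactly) and ``canonical'' finger moves, and checking that the sign conventions adopted in this paper (the $[\eta:\sigma]$ convention and the orientation rules for $\sgn_{f,f'}$) make the bookkeeping come out to $\pm\varphi_{\eta,\mu}$ rather than some other cochain. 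A secondary subtlety is that, since $f$ and $f'$ need not keep $|L|$ embedded along the whole homotopy, one should first homotope within general position maps so that the interpolation keeps $L$ generic; this is routine since $\dim L = k-1$ and $2(k-1) < 2k$, so a generic homotopy keeps $f_t|_{|L|}$ an embedding except possibly at isolated parameters, and those events again contribute finger moves.
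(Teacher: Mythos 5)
Your proposal is correct and follows essentially the same geometric route as the paper: a general-position homotopy has finitely many critical parameters, each of which either leaves $\vartheta$ unchanged (Reidemeister-II--type pair creation/annihilation, which is balanced in sign) or contributes a finger-move cochain $\pm\varphi_{\eta,\mu}\in F$. The paper presents exactly this sketch and then, rather than carrying out the general-position bookkeeping you outline via~\cite{freedman-krushkal-teichner94}, defers the rigorous version to~\cite[Lemma~3.5]{shapiro57} and notes that the generalization from $\R^{2k}$ to $M$ is straightforward.
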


The proof of Lemma~\ref{l:homotopic} is given in~\cite[Lemma~3.5]{shapiro57}
and reproduced, e.g., in~\cite[Lemma~1]{freedman-krushkal-teichner94} in the
case that $M = \R^{2k}$. The proof is based on an existence of a homotopy
between $f$ and $f'$ and can be used essentially in verbatim in our setting.

\begin{lemma}
\label{l:f'}
  Let $f\colon |K| \to M$ be a general position map such that the restriction of
  $f$ to $|K^{(k-1)}|$ is nullhomotopic. Then there is a PL $2k$-ball $B$ in $M$ and a general position map $f' \colon
  |K| \to \overline{M \setminus B}$ homotopic to $f$ (in $M$) such that
   $f'(|K^{(k-1)}|) \subseteq \partial B$.
\end{lemma}

\begin{proof}
Because the restriction of $f$ to $|K^{(k-1)}|$ is nullhomotopic, by the homotopy
  extension property~\cite[Proposition~0.16]{hatcher01} there is $f'' \colon
  |K| \to M$ homotopic to $f$ such that the restriction of $f''$ to $|K^{(k-1)}|$ is
  constant. Let $x = f''(|K^{(k-1)}|)$ and let $B$ be $2k$-ball such that $x \in
  \partial B$. By further homotopy, we can get $f''' \colon |K| \to M$ such
  that $f'''$ restricted to $|K^{(k-1)}|$ is in general position, and $f'''(|K^{(k-1)}|)
  \subseteq \partial B$. (We first perform the homotopy on $|K^{(k-1)}|$ and then we use the
  homotopy extension property again.) Finally, by next homotopy fixed on $|K^{(k-1)}|$
  we push the image of $|K| \setminus |K^{(k-1)}|$ outside $B$ so that the resulting
  map is in general position, obtaining the required $f'$.
\end{proof}

Now the shift of $f$ to $f'$ in the previous lemma allows us to easily compare
the intersection cochain $\vartheta_f'$ with the intersection cochain $\vartheta_g$
of another map $g$ which is fully inside $B$. The advantage of using $g$ is
that this is essentially the case in $\R^{2k}$. Therefore
let $B \subseteq M$ be a $2k$-ball in $M$.
Let $f'\colon |K| \to \overline{M \setminus B}$ and $g\colon |K| \to B$ be two
general position maps such that $f'|_{|K^{(k-1)}|} = g|_{|K^{(k-1)}|}$. (In particular $f'(K^{(k-1)}) = g(K^{(k-1)}) \subseteq \partial B$.)

Now for a $k$-simplex $\sigma \in K$ let $z_\sigma$ be the (singular)
$k$-cycle $f'(\sigma) -
g(\sigma)$.
We also define $\omega_{f',g} \in C^{2k}_{\alts}(\tilde K; R)$ via
\begin{equation}
\label{e:omega_fg}
  \omega_{f',g}(\sigma \times \tau) := \Omega([z_\sigma], [z_\tau]).
\end{equation}

\begin{lemma}
\label{l:omega}
  $\vartheta_{f'} = \omega_{f',g} - \vartheta_{g}$.
\end{lemma}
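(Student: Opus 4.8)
The plan is to expand both sides on a generating product $\sigma \times \tau \in \tilde K$ of skew-symmetric cochains and reduce everything to the bilinearity of the intersection form together with the defining formula~\eqref{e:form}. Fix disjoint $k$-simplices $\sigma, \tau \in K$. By definition $z_\sigma = f'(\sigma) - g(\sigma)$ and $z_\tau = f'(\tau) - g(\tau)$ are $k$-cycles (their boundaries cancel because $f'$ and $g$ agree on $|L|$, hence on $\partial\sigma$ and $\partial\tau$), and $\omega_{f',g}(\sigma \times \tau) = \Omega([z_\sigma],[z_\tau])$. Using bilinearity of $\Omega$ (property (i)) I would write
\[
\Omega([z_\sigma],[z_\tau]) = \Omega([f'(\sigma)] - [g(\sigma)], [f'(\tau)] - [g(\tau)]),
\]
and then expand into four terms; applying property~(iv), i.e.\ formula~\eqref{e:form}, to each term individually (with the appropriate one-term cycles on each side) converts each $\Omega$-term into the corresponding intersection number of images of simplices. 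This gives
\[
\omega_{f',g}(\sigma \times \tau) = f'(\sigma)\cdot f'(\tau) - f'(\sigma)\cdot g(\tau) - g(\sigma)\cdot f'(\tau) + g(\sigma)\cdot g(\tau).
\]

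The next step is to identify the terms. The first term is exactly $\vartheta_{f'}(\sigma\times\tau)$ and the last term is exactly $\vartheta_g(\sigma\times\tau)$ by definition of the representative cochain. So it remains to argue that the two mixed terms $f'(\sigma)\cdot g(\tau)$ and $g(\sigma)\cdot f'(\tau)$ vanish. This is where the geometric hypotheses enter: $f'$ maps $|K|$ into $\overline{M\setminus B}$ and $g$ maps $|K|$ into $B$, and they agree only on $|L|$, which lands in $\partial B$. Since $\sigma$ and $\tau$ are $k$-simplices with $\sigma\cap\tau = \emptyset$, the interiors of $g(\tau)$ and $f'(\sigma)$ lie in the open ball $\operatorname{int} B$ and in $M\setminus B$ respectively, hence are disjoint; the only possible intersections would be on $f'(\partial\sigma)\cap g(\partial\tau) \subseteq f'(|L|)\cap g(|L|) \subseteq \partial B$, but general position of $f'\sqcup g$ (which I should note is implicit in the setup, as $f'|_{|L|}=g|_{|L|}$ is injective and the maps are general position maps) forces all intersection points to lie in interiors of the top simplices, so there are no contributing crossings. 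Hence $f'(\sigma)\cdot g(\tau) = 0$, and symmetrically $g(\sigma)\cdot f'(\tau) = 0$.

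Putting these together yields $\omega_{f',g}(\sigma\times\tau) = \vartheta_{f'}(\sigma\times\tau) + \vartheta_g(\sigma\times\tau)$ for every generator, i.e.\ $\omega_{f',g} = \vartheta_{f'} + \vartheta_g$, which rearranges to the claimed identity $\vartheta_{f'} = \omega_{f',g} - \vartheta_g$. I expect the main subtlety to be the careful justification that the mixed intersection numbers vanish: one must be precise that the shared part $f'(|L|)=g(|L|)$ sits inside $\partial B$ and contributes no transversal double points between a $k$-simplex's image under $f'$ and a disjoint $k$-simplex's image under $g$, using that general position double points avoid $|L|$. A secondary point to check is that applying~\eqref{e:form} term-by-term is legitimate even though the four cycles $f'(\sigma), g(\sigma), f'(\tau), g(\tau)$ are not literally in mutually dual cell decompositions; as the excerpt already notes after~\eqref{e:form}, this is handled by passing to a fine enough subdivision and perturbing, and since the maps are already in general position the intersection numbers are unchanged. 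Everything else is bilinearity bookkeeping.
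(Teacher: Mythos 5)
Your proof is correct and follows essentially the same route as the paper: expand $\omega_{f',g}(\sigma\times\tau)=\Omega([z_\sigma],[z_\tau])$ into the four intersection numbers, identify the diagonal terms as $\vartheta_{f'}$ and $\vartheta_g$, and note that the mixed terms vanish because $f'(\sigma)\cap g(\tau)=g(\sigma)\cap f'(\tau)=\emptyset$ (the paper simply asserts this disjointness, which your more detailed discussion correctly justifies). One small notational imprecision worth flagging: $f'(\sigma)$ and $g(\sigma)$ individually are chains with nonzero boundary, not cycles, so $[f'(\sigma)]$ is not a homology class and property (iv) cannot literally be applied to ``one-term cycles''; the clean bookkeeping is to apply (iv) once to the actual cycles $z_\sigma$ and $z_\tau$, whose right-hand side already yields exactly the desired four-term sum of intersection numbers.
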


\begin{proof}
 For $\sigma \times \tau \in \tilde K$ we have
 \[\omega_{f',g}(\sigma \times \tau) = \Omega([z_\sigma], [z_\tau]) =
  f'(\sigma)\cdot f'(\tau) + g(\sigma)\cdot g(\tau) = \vartheta_{f'}(\sigma \times
\tau) + \vartheta_{g}(\sigma \times \tau).\]
The second equality follows from the fact that $f'(\sigma) \cap g(\tau) =
  g(\sigma) \cap f'(\tau) = \emptyset$ and from~\eqref{e:form}.
\end{proof}

\begin{proposition}
\label{p:transfer}
Let $f \colon |K| \to M$ be a general position map with $\vartheta_f = 0$.
  Assume that the restriction of $f$ to $|K^{(k-1)}|$ is nullhomotopic.
  Then there is a homomorphism $\psi \colon
  C_k(K; R) \to H_k(M; R)$ such that $[\omega_\psi] - \oo(K)$ is
  trivial.
\end{proposition}

\begin{proof}
  Let $f' \colon |K| \to \overline{M \setminus B}$ be the map obtained from
  Lemma~\ref{l:f'}. Take an arbitrary general position map $g\colon
    |K| \to B$ which coincides with $f'$ on $\partial B$ and define $z_\sigma$
    and 
      $\omega_{f',g}$ as above. By Lemma~\ref{l:omega} and
      Lemma~\ref{l:homotopic}, we get
        $[\omega_{f',g} - \vartheta_{g}] = [\vartheta_{f'}] = [\vartheta_f] =
	[0] = 0.$ Let us define $\psi(\sigma)$ to be the homology class of
	$z_\sigma$ in $H_k(M; R)$. Then, according to the
	earlier definition of $\omega_{\psi}$, we get $\omega_{\psi} =
	\omega_{f',g}$ and $\oo(K) = [\vartheta_g]$. Therefore
	$[\omega_\psi] - \oo(K) = [\omega_{f',g} - \vartheta_{g}] = 0$.
\end{proof}

Theorem~\ref{t:obstruction} is an immediate consequence.

\begin{proof}[Proof of Theorem~\ref{t:obstruction}]
In this proof we use both topological and PL maps therefore we carefully
distinguish them.
Let $f$ be a topological almost embedding from the statement of
Theorem~\ref{t:obstruction}. By a small perturbation
(cf.~\cite[Lemma~4.8]{hudson69}) we can assume that $f$ is a general position 
PL map and still
an almost embedding (if $f(\sigma)$ and $f(\tau)$ have a positive distance
before the perturbation in some metric inducing topology of $M$, then they have
positive distance also after a sufficiently small perturbation). In particular
$\vartheta_f = 0$. Now we can use
  Proposition~\ref{p:transfer}.
\end{proof}

\subparagraph{Finger moves.} For further applications it is useful to describe the zero cohomology
class in $H^{2k}_{\alts}(\tilde K; R)$ explicitly. This we will do now via so
called elementary cochains/finger moves.

Following~\cite{freedman-krushkal-teichner94} let
us define elementary cochains as follows: Given a $(k-1)$-simplex $\eta$ and
$k$-simplex $\mu$ with $\eta \cap \mu = \emptyset$, let $\varepsilon_{\eta,\mu} \in C^{2k-1}_{\sym}(\tilde K; R)$ be
given by $\varepsilon_{\eta,\mu}(\eta \times \mu) = \varepsilon_{\eta,\mu}(\mu \times
\eta) = 1$ while it evaluates to zero on remaining $2k-1$ cells. These cochains
generate $C^{2k-1}_{\sym}(\tilde K; R)$. Therefore $\delta \varepsilon_{\eta,\mu}$
generate $\im \delta_{2k-1}$; compare with~\eqref{e:coboundary_operators}. 
In other words, for $\xi \in C^{2k}_{\alts}(\tilde K; R)$ and
its cohomology class $[\xi] \in H^{2k}_{\alts}(\tilde K; R)$ we get 

\begin{equation}
\label{e:sum_finger_moves}
  [\xi] = 0 \hskip1cm \hbox{if and only if} \hskip1cm \xi = \sum_{\eta, \mu} n_{\eta,\mu} \delta \varepsilon_{\eta,\mu}
\end{equation}
where the sum is over all pairs $\eta$, $\mu$ as above and $n_{\eta,\mu} \in
R$.

Geometrically, the cochains $\delta \varepsilon_{\eta,\mu}$ correspond to so
called finger moves; see Figure~\ref{f:fm}. If we pull a finger from $\mu$
towards $\eta$ as suggested on Figure~\ref{f:fm}, the intersection cochain
changes by $\pm \delta \varepsilon_{\eta,\mu}$ (both signs are achievable
depending on how is the finger pulled). For a slightly more details, we refer
to~\cite{freedman-krushkal-teichner94}.  

\begin{figure}
\begin{center}
  \includegraphics{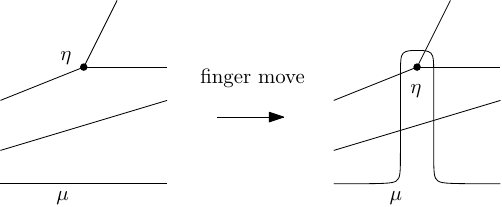}
\end{center}
  \caption{Geometric finger move inducing the change on the intersection
  cochain by $\pm \delta \varepsilon_{\eta,\mu}$.}
\label{f:fm}
\end{figure}

\subparagraph{System of quadratic equations.}
Our next aim is to describe an existence of almost embedding via solvability of
a certain system of quadratic equations. 

Let $\eta, \mu \in K$ be a $(k-1)$-simplex and $k$-simplex respectively and
assume that $\eta$ and $\mu$ are disjoint. For every such pair we define a
variable $x_{\eta,\mu}$. 

Next we need to distinguish whether $R = \Z$ or $R = \Z_2$. If $R = \Z$, 
assume that $H_k(M; \Z) \cong \Z^b \oplus
T_k(M; \Z)$ where $T_k(M; \Z)$ is the torsion. Let $\pi\colon H_k(M; \Z) \to \Z^b$
be the homomorphism obtained from the isomorphism above after factoring out the
torsion. If $R = \Z_2$, then $H_k(M; \Z_2) \cong \Z_2^b$ for some $b$ and we
take an arbitrary isomorphism $\pi\colon H_k(M; \Z_2) \to \Z_2^b$.

Let $\AA_\Omega \in R^{b \times b}$ be the matrix of $\Omega$, that is, for
every $h,h' \in H_k(M;R)$ we have
$\Omega(h,h') = \pi(h)^{T}\AA_{\Omega}\pi(h')$.
For every $k$-simplex $\sigma$ and every $i \in \{1, \dots, b\}$ we define an integer variable
$y_{\sigma}^i$ and we set $\yy_{\sigma} := (y_{\sigma}^1, \dots, y_{\sigma}^b)$.
  Let $\vartheta_g$ be any fixed intersection cochain in the class $\oo_K$; an explicit choice is described in~\cite[App. D]{matousek-tancer-wagner11}.

Now consider a system of quadratic equations with variables
$x_{\eta,\mu}$ and $\yy_{\sigma}$ over $R$ given by the following
equation for each pair $\{\sigma, \tau\}$ of disjoint $k$-simplices.
\begin{equation}
\label{e:quadratic}
\sum_{\eta,\mu} x_{\eta,\mu}\delta\varepsilon_{\eta,\mu}(\sigma \times \tau) +
  \yy_{\sigma}^T \AA_{\Omega} \yy_{\tau}  = \vartheta_g(\sigma \times
  \tau).
\end{equation}
We remark that swapping $\sigma$ and $\tau$ gives the same equation as both
sides are alternately-symmetric. 

\begin{theorem}
\label{t:quadratic}
  Let $M$ be a compact $R$-orientable PL $2k$-manifold. Then there is a
homomorphism $\psi \colon C_k(K; R) \to H_k(M; R)$ such that $[\omega_\psi] -
  \oo(K)$ is
  trivial (considered over $R$) if and only if the system of
  equations~\eqref{e:quadratic} has a solution in $R$.
\end{theorem}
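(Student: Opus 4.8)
The plan is to unwind the definition of $[\omega_\psi] - \tilde\oo(K)$ being trivial in $C^{2k}_{\skw}(\tilde K; R)/F$ and match it term-by-term against the system~\eqref{e:quadratic}. First I would note that, by definition, $[\omega_\psi] - \tilde\oo(K) = 0$ in $C^{2k}_{\skw}(\tilde K; R)/F$ means precisely that $\omega_\psi - \vartheta_g \in F$, i.e. that there exist coefficients $x_{\eta,\mu} \in R$ (indexed by disjoint pairs $(\eta,\mu)$ with $\eta$ a $(k-1)$-simplex and $\mu$ a $k$-simplex) such that
\[
\omega_\psi - \vartheta_g = \sum_{\eta,\mu} x_{\eta,\mu}\,\varphi_{\eta,\mu}
\]
as skew-symmetric cochains. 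Since a skew-symmetric cochain on $\tilde K$ is determined by its values on pairs $\sigma \times \tau$ of disjoint $k$-simplices (one value per unordered pair, up to the sign $(-1)^k$), this single cochain identity is equivalent to the collection of scalar equations, one for each unordered disjoint pair $\{\sigma,\tau\}$,
\[
\omega_\psi(\sigma \times \tau) - \vartheta_g(\sigma \times \tau) = \sum_{\eta,\mu} x_{\eta,\mu}\,\varphi_{\eta,\mu}(\sigma \times \tau).
\]
This already has exactly the shape of~\eqref{e:quadratic} except that $\omega_\psi(\sigma\times\tau) = \Omega(\psi(\sigma),\psi(\tau))$ must be rewritten as $\yy_\sigma^T \AA_\Omega \yy_\tau$.

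The second step is to translate between homomorphisms $\psi \colon C_k(K;R) \to H_k(M;R)$ and tuples of integer (resp. $\Z_2$) variables $\yy_\sigma$. In one direction: given $\psi$, set $\yy_\sigma := \pi(\psi(\sigma))$ for each $k$-simplex $\sigma$; then by the defining property of $\AA_\Omega$, namely $\Omega(h,h') = \pi(h)^T \AA_\Omega \pi(h')$, we get $\omega_\psi(\sigma\times\tau) = \yy_\sigma^T \AA_\Omega \yy_\tau$, so a solution of the cochain identity yields a solution of~\eqref{e:quadratic}. In the other direction: given a solution $(x_{\eta,\mu}, \yy_\sigma)$ of~\eqref{e:quadratic}, I need to produce a homomorphism $\psi$. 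Here is where one must be slightly careful. Since $C_k(K;R)$ is free on the $k$-simplices $\sigma$, any assignment $\sigma \mapsto h_\sigma \in H_k(M;R)$ extends uniquely to a homomorphism; so it suffices to choose, for each $\sigma$, some $h_\sigma$ with $\pi(h_\sigma) = \yy_\sigma$. When $R = \Z_2$, $\pi$ is an isomorphism, so $h_\sigma$ is determined. When $R = \Z$, $\pi$ is surjective (it is the projection onto the free part $\Z^b$ in a chosen splitting $H_k(M;\Z) \cong \Z^b \oplus T_k(M;\Z)$), so we may pick $h_\sigma$ to be the element $(\yy_\sigma, 0)$ under that splitting — this is the natural section of $\pi$. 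Either way $\pi(h_\sigma) = \yy_\sigma$, hence $\omega_\psi(\sigma\times\tau) = \yy_\sigma^T\AA_\Omega\yy_\tau$, and property (iii) of the intersection form (vanishing on torsion) guarantees $\Omega$ only sees $\pi$, so nothing is lost by the choice of section.

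Combining the two steps: a solution of~\eqref{e:quadratic} is, via $\psi$ as just constructed, the same data as a witness that $\omega_\psi - \vartheta_g = \sum_{\eta,\mu} x_{\eta,\mu}\varphi_{\eta,\mu} \in F$, i.e. that $[\omega_\psi] = [\vartheta_g] = \tilde\oo(K)$; and conversely. I should also remark — as the paper does right after~\eqref{e:quadratic} — that swapping $\sigma$ and $\tau$ produces the same equation because both $\varphi_{\eta,\mu}$, $\omega_\psi$ (via skew-symmetry of $\Omega$) and $\vartheta_g$ are skew-symmetric, so the indexing of~\eqref{e:quadratic} by \emph{unordered} pairs matches the indexing of the cochain identity with no double counting or inconsistency. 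The only genuinely substantive point — the ``main obstacle,'' though it is mild — is the verification that every homomorphism $\psi$ arising from the $y$-variables really does satisfy $\omega_\psi(\sigma\times\tau) = \yy_\sigma^T\AA_\Omega\yy_\tau$ on the nose, which rests entirely on the definition of $\AA_\Omega$ together with the torsion-vanishing property (iii); once this is in hand the equivalence is a bookkeeping identity. Everything else is just recording that $F$ is by definition the $R$-span of the finger-move cochains $\varphi_{\eta,\mu}$ and that a skew-symmetric cochain is free on unordered disjoint pairs of $k$-simplices.
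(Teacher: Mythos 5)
Your proposal is correct and follows essentially the same route as the paper: unwind triviality in $C^{2k}_{\skw}(\tilde K;R)/F$ into a scalar equation per unordered disjoint pair, translate between $\psi$ and $\yy_\sigma$ via $\pi$ and an arbitrary section of $\pi$ (the paper takes an arbitrary preimage in $\pi^{-1}(\yy_\sigma)$, you take the splitting section; both work because $\Omega(h,h') = \pi(h)^T\AA_\Omega\pi(h')$ by the definition of $\AA_\Omega$). The only cosmetic slip is that the cochain identity you write gives~\eqref{e:quadratic} with the sign of the $x_{\eta,\mu}$ flipped; the paper handles this by replacing $x_{\eta,\mu}$ with $-x_{\eta,\mu}$ (using $-F=F$), and you should say this explicitly rather than claiming the shapes match ``exactly,'' but it does not affect correctness.
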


\begin{proof} First assume that $[\omega_\psi] - \oo(K)$ is
  trivial, 
  hence $\omega_\psi$ and $\vartheta_g$ differ by a linear combination of
  cochains $\delta \varepsilon_{\eta,\mu}$
  by~\eqref{e:sum_finger_moves}.
  Thus,
  there are $x_{\eta,\mu} \in R$, one for each $\delta
  \varepsilon_{\eta,\mu}$,
  such
  that for every $\sigma \times \tau \in \tilde K$ we get $\omega_\psi(\sigma
  \times \tau) - \vartheta_g(\sigma \times \tau) = \sum_{\eta, \mu}
  x_{\eta,\mu} \delta\varepsilon_{\eta,\mu}(\sigma \times \tau)$. We also set
  $\yy_\sigma$ as $\pi(\psi(\sigma))$, then $\omega_\psi(\sigma \times \tau) =
  \yy_{\sigma}^T \AA_{\Omega} \yy_{\tau}$. By rearranging and swapping the
  signs at all $x_{\eta,\mu}$ we get a solution of~\eqref{e:quadratic}.

  Now assume that we have a solution of~\eqref{e:quadratic}. For a $k$-simplex
  $\sigma \in K$, we define $\psi(\sigma)$ as an arbitrary element in $\pi^{-1}(\yy_\sigma)$ 
  and we extend $\psi$ to a
  homomorphism from $C_k(K; \Z)$ to $H_k(M; \Z)$. We get $\omega_\psi(\sigma
  \times \tau) = \yy_{\sigma}^T \AA_{\Omega} \yy_{\tau}$. Therefore,
  from~\eqref{e:quadratic}, we get that $\omega_\psi - \vartheta_g$ is a
  linear combination of cochains $\delta\varepsilon_{\eta,\mu}$
  by~\eqref{e:sum_finger_moves}.
  This gives that $\omega_\psi$ and $\vartheta_g$ belong  to the same
  cohomology class of $H^{2k}_{\alts}(\tilde K; R)$; that is, $[\omega_\psi] - \oo(K)$ is trivial.
\end{proof}

\section{K\"{u}hnel question}
\label{s:kuhnel}

  In this section we prove Theorem~\ref{t:kuhnel_improved}. However; first we sketch a proof of Theorem~\ref{t:kuhnel_improved} with slightly weaker
bound $n \leq 2k+1 + (k+2)\beta_k(M;\Z_2)$. For a full proof of
Theorem~\ref{t:kuhnel_improved}, it is not necessary to follow this sketch.
However, it may help to understand why do we prepare some auxiliary claims.

  \begin{proof}[Sketch of Theorem~\ref{t:kuhnel_improved} with a weaker bound $n
    \leq
    2k+1 + (k+2)\beta(M;\Z_2)$.]
  Let us assume that $K = \Delta^{(k)}_n$ almost embeds into $M$. By
  Theorem~\ref{t:obstruction}, there is $\psi\colon C_k(K; \Z_2) \to H_k(M;\Z_2)$
  such that $[\omega_\psi] + \oo(K) = 0$. Consider an induced
  subcomplex $J$ of $K$ on $2k+3$ vertices (if $n \geq 2k+2$, otherwise we are
  done). It is well known that the $\Z_2$ van
    Kampen obstruction of this complex is nonzero. The formula $[\omega_\psi]
    + \oo(K) = 0$ therefore implies that
  $\omega_\psi$ is also nontrivial when restricted to $J$. In particular, we
  will deduce that there is a pair of cycles $z, z' \in Z_k(J;\Z_2)$ such that
  $\Omega(\psi(z), \psi(z')) = 1$. Because $Z_k(J;\Z_2)$ is generated by
  boundaries of $(k+1)$-simplices, we can assume that $z = \partial \kappa, z' =
    \partial \kappa'$
  where $\kappa$ and $\kappa'$ are $(k+1)$-simplices in $J$. On
    the other hand we will also show that $\Omega(\psi(\partial \kappa), \psi(\partial
    \lambda)) = 0$ whenever $\lambda$ is $(k+1)$-simplex disjoint from $\kappa$.
    (In particular $\kappa'$ must share a vertex with $\kappa$ but we do not
    need it now.)

 Let $K'$ be the induced subcomplex of $K$ on all vertices of $K$ except the
  $k+2$ vertices of $\kappa$. From the conditions $\Omega(\psi(\partial \kappa),
    \psi(\partial \kappa')) = 1$ and $\Omega(\psi(\partial \kappa),
        \psi(\partial \lambda)) = 0$ (for $\lambda$ disjoint with $\kappa$) it follows that when considering the restriction of $\Omega$ to $K'$, the
  rank of $\Omega$ is decreased at least by $1$. We repeat this procedure as
  long as the currently considered subcomplex has at least $2k+3$ vertices.
  In the beginning, the rank of $\Omega$ is
  at most $\beta_k(M;\Z_2)$, thus we may perform at most $\beta_k(M;\Z_2)$ such
    removals of $k+2$ vertices. This shows that the number of vertices $K$
    (which is $n+1$) is at  most $\beta_k(M;\Z_2)(k+2) + 2k+2$ because after all removals we cannot have
  a subcomplex on $2k+3$ vertices or more.
\end{proof}

It is not too difficult to fill the details in the sketch above. However, as our bounds 
in Theorem~\ref{t:kuhnel_improved} indicate, in a proof of
Theorem~\ref{t:kuhnel_improved}(i) we want to remove only $k+1$ vertices
instead of $k+2$ in each step; on the other hand in a proof of
Theorem~\ref{t:kuhnel_improved}(ii), we will remove $k+2$ vertices but we will
decrease the rank of the form by $2$. These improvements require a more subtle
analysis.

Throughout this section, let $k,n \geq 1$. We set $K := \Delta^{(k)}_{n}$ to be the
$k$-skeleton of an $n$-simplex while $M$ is a compact PL $2k$-manifold. We work only with $\Z_2$ coefficients, that is, we set $R =\Z_2$.
Note the notions
`alternately symmetric' and `symmetric' coincide over $\Z_2$. In particular, 
$\Omega$ is a
symmetric bilinear form over $\Z_2$ in this case. We also systematically replace `$\alts$' with `$\sym$' in expressions such as
$C^{2k}_{\alts}(\tilde K; \Z_2)$.

Now we state the two main ingredients for the proof of
Theorem~\ref{t:kuhnel_improved}. Given a vertex $v$ of $\Delta_n$ and a simplex
$\sigma \in \Delta_n$ not
containing $v$, by $\sigma * v$ we denote the join of $\sigma$ and $v$,
that is, the simplex formed by vertices of $\sigma$ and by $v$. Note that if
$\kappa$ is a $(k+1)$-simplex in $\Delta_n$, then $\partial \kappa$ belongs to
$K$.

\begin{proposition}\label{p:vanKampenForm} Assume that 
  $K$ almost embeds into $M$, then there is a symmetric bilinear form $\Lambda$ on
  $Z_k(K; \Z_2) = H_k(K; \Z_2)$ of rank at most $\beta_k(M;\Z_2)$ satisfying
  the following conditions.
  \begin{enumerate}[{\rm (C1)}]
\item
  Let $\kappa$, $\kappa'$ be disjoint $(k+1)$-simplices in $\Delta_{n}$.
      Then $\Lambda(\partial \kappa, \partial \kappa') = 0$.\label{it:disjoint}
    \item Let $J$ be an induced subcomplex of $K$ on $2k+3$ vertices (that is,
      $J$ is isomorphic to $\Delta^{(k)}_{2k+2}$). Then for every vertex $v$ of
      $J$ we get\label{it:kfive}
      \[    \sum_{\{\sigma, \tau\} \in P_{J,v}} \Lambda(\partial(\sigma *
	v),\partial(\tau * v)) = 1
      \]
      where $P_{J,v}$ is the set of all unordered pairs $\{\sigma, \tau\}$ of
      disjoint $k$-simplices in $J$ avoiding $v$.
\end{enumerate}

  In addition, if $\Omega(h,h) = 0$ for every $h \in H_k(M;\Z_2)$, then
  $\Lambda(z,z) = 0$ for every $z \in Z_k(K; \Z_2)$.
\end{proposition}

Relation between Proposition~\ref{p:vanKampenForm} and the preceding sketch is
the following: If $K$ embeds into $M$, then by Theorem~\ref{t:obstruction} there is $\psi\colon C_k(K;R) \to H_k(M;R)$ such
  that $[\omega_\psi] -  \oo(K) = 0$. We will take $\Lambda$ so that $\Lambda(z, z') =
\Omega(\psi(z), \psi(z'))$. Then the conditions (C\ref{it:disjoint}) and (C\ref{it:kfive}) 
verify
the conditions required in the sketch.
Although we do not need it, we note that the proof of Theorem~3
in~\cite{kyncl20} shows that 
the other implication can be partially reverted for $k=1$: If
(C\ref{it:disjoint}) and (C\ref{it:kfive}) are satisfied for
$\Lambda$ obtained from $\Omega$ and $\psi$ as above, then
$[\omega_\psi] + \oo(K)$ is trivial.

\begin{proposition}
\label{p:induction}
Assume that $\Lambda$ is a symmetric bilinear form on $Z_k(K; \Z_2)$
  satisfying conditions {\rm (C1)} and {\rm (C2)} (in
  Proposition~\ref{p:vanKampenForm}).

Then
\begin{gather}
   n\leq (2k+1) + (k+1)\operatorname{rank}\Lambda\quad\text{and}\label{e:general_Lambda}\\
   n\leq (2k+1) + \frac{(k+2)\operatorname{rank}\Lambda}{2}\quad\text{if $\Lambda(z,z)=0$ for all $z\in
   Z_k(K;\Z_2)$.}\label{e:symplectic_Lambda}
\end{gather}
\end{proposition}

Assuming the two propositions above, Theorem~\ref{t:kuhnel_improved} follows
immediately:

\begin{proof}[Proof of Theorem~\ref{t:kuhnel_improved}]
Assume that $K$ almost embeds into $M$. Let $\Lambda$ be the symmetric bilinear
  form on $Z_k(K; \Z_2)$ obtained from Proposition~\ref{p:vanKampenForm}.
  Because $\operatorname{rank}\Lambda\leq\beta_k(M;\Z_2)$, we immediately deduce Theorem~\ref{t:kuhnel_improved}(i) from~\eqref{e:general_Lambda}.
  If, in addition, $\Omega(h,h) = 0$ for every $h \in H_k(M; \Z_2)$, then
  $\Lambda(z,z) = 0$ for every $z \in Z_k(M;\Z_2)$ and we deduce
  Theorem~\ref{t:kuhnel_improved}(ii) from~\eqref{e:symplectic_Lambda}.
\end{proof}

Propositions~\ref{p:vanKampenForm} and~\ref{p:induction} are proved in
forthcoming subsections. 

In fact we conjecture that the bounds given by Proposition~\ref{p:induction} can
be improved to the K\"{uhnel} bounds:

\begin{conj}
\label{c:kuhnel_lambda}
Assume that $\Lambda$ is a symmetric bilinear form on $Z_k(K; \Z_2)$
  satisfying conditions {\rm (C1)} and {\rm (C2)} (in
  Proposition~\ref{p:vanKampenForm}).

Then
\[
  \binom{n-k-1}{k+1}\leq \binom{2k+1}{k+1}\operatorname{rank}\Lambda.
\]
\end{conj}

Proposition~\ref{p:vanKampenForm} and Conjecture~\ref{c:kuhnel_lambda}
together imply K\"{u}hnel's conjecture (for PL manifolds) in the same way as
Theorem~\ref{t:kuhnel_improved} is proved. In fact they imply even something
stronger. (It is not necessary to assume $(k-1)$-connectedness and the
conclusion holds for almost-embeddings.)

\paragraph{Computer-assisted bounds.} In our proof of Theorem~\ref{t:kuhnel_improved}, we do not use
Proposition~\ref{p:vanKampenForm} in full strength---at least for small
values the bounds can be improved:
If $b$ is odd, all non-degenerate symmetric bilinear forms on $\Z_2^b$ are
equivalent to the form with the identity matrix $\II_b$.
If $b=2c$, we furthermore have symplectic forms---forms equivalent to
\[\QQ_b=\begin{pmatrix}
0 & \II_c \\ \II_c & 0 \end{pmatrix}.\]
These are the forms satisfying $\Lambda(z,z) = 0$ for every
$z$.
The matrix of $\Lambda$ can hence be written as $\AA_\Lambda =
\BB^T \XX \BB$, where $\XX =\II_{b'}$ or $\XX=\QQ_{b'}$ for $b'=\operatorname{rank} \Lambda\leq\beta_k(M;\Z_2)$, and $\BB$ is a $b'\times  \dim Z_k(K;\Z_2)$ matrix over $\Z_2$.
Proposition~\ref{p:vanKampenForm} then translates into equations over $\Z_2$
which $\BB$ needs to satisfy. 
For small values these equations can be turned into a CNF formula and checked by modern SAT solvers,
preferably ones that support xor clauses, e.g. CryptoMiniSat~\cite{soos09}.
Using this technique we obtain computer assisted bounds in
Table~\ref{tab:small_values}.

\begin{table}
\begin{center}
\begin{tabular}{c|c|c|c|c}
  $k$ & $\beta$
  & max $n$, $\Lambda\sim \II$ & max $n$,
  $\Lambda$ symplectic & K\"uhnel's conjecture \\ \hline
  \multirow{4}{*} 1 & 1 & {\bf 5} (Prop~\ref{p:induction}) &- & 5\\
					& 2 & {\bf 5} & {\bf 6}
					(Prop~\ref{p:induction}) & 6 \\
					& 3 & {\bf 6} &- & 6 \\
					& 4 & {\bf 7} & {\bf 7} & 7\\ \hline
  \multirow{2}{*} 2 & 1 &  {\bf 8} (Prop~\ref{p:induction}) &-&8\\
					& 2 &  {\bf 8} & 7 & 9\\ \hline
  \multirow{1}{*} 3& 1  & $9\leq n\leq 10$ & - & 11\\ \hline
  \multirow{1}{*} 4 & 1 & {\bf 14} (Prop~\ref{p:induction}) & - & 14 \\ \hline
\end{tabular}

  \caption{The table gives
maximal $n$ for which there is a symmetric bilinear form $\Lambda$ on
  $Z_k(\Delta_n^{(k)},\Z_2)$ of rank $\beta$
  satisfying conditions (C1) and~(C2)
  of Proposition~\ref{p:vanKampenForm}, distinguishing whether $\Lambda \sim
\II$ or whether $\Lambda$ is symplectic. Via Proposition~\ref{p:vanKampenForm},
  if $\Delta_{n'}^{(k)}$ almost embeds into $M$ with $\beta_k(M, \Z_2) =
  \beta$ and the intersection form $\Omega \sim \Lambda$, then $n' \leq n$
  where $n$ is the value in the table.
  (Strictly speaking, this
  requires checking that $\Omega \sim \Lambda$ in the proof of
  Proposition~\ref{p:vanKampenForm}.)
\newline
  For the values in bold, there is a matching lower bound in a strong sense: There
  exists a $(k-1)$-connected closed manifold $M$ with $\beta_k(M, \Z_2) =
  \beta$ and the intersection form $\Omega \sim \Lambda$ such that
  $\Delta_n^{(k)}$ embeds into $M$. Indeed for $k=1$ and $\beta \leq 4$, the lower bounds is given by
  the Ringel-Youngs theorem~\cite{Ringel68,ringel74}.
For $k=2$ and $\beta_2=1$, there is Kühnel's $9$-point triangulation of $\C P^2$~\cite{Kuhnel1983}. Taking its two skeleton shows that $\Delta_{8}^{(2)}$ embeds into $\C P^2$.
For this reason $\Delta_{8}^{(2)}$ also embeds into $\C P^2\sharp \C P^2$ (for which $\beta_2=2$).
The case $k=4,\beta_k(M;\Z_2)$ corresponds to the $4$-skeleton of the $15$-point triangulation of the quaternionic projective plane~\cite{Gorodkov19}.
\newline
Last but not least, in the 5th column, we provide a bound that would follow
  from K\"{u}hnel's conjecture (which does not distinguish type of $\Lambda$).
  It is interesting to note that in the cases $(k,\beta) = (2,2), (3,1)$ our
  computer assisted bounds even beat K\"{u}hnel's conjecture. 
  }
\label{tab:small_values}
\end{center}
\end{table}

\subsection{Proof of Proposition~\ref{p:vanKampenForm}}
\label{ss:vanKampenForm}
Given $\psi$ as in
Theorem~\ref{t:obstruction}, as announced earlier, we define a symmetric
bilinear form $\Lambda$ on $Z_k(K; \Z_2)$ via $\Lambda(z,z') :=
\Omega(\psi(z),\psi(z'))$. We observe that the rank of $\Lambda$ is at most the
rank of $\Omega$ which is at most $\beta_k(M; \Z_2)$. We also observe that if
$\Omega(h,h) = 0$ for every $h \in H_k(M;\Z_2)$, then $\Lambda(z,z) = 0$ for
every $z \in Z_k(K;\Z_2)$. These are some of the conditions required on
$\Lambda$ in Proposition~\ref{p:vanKampenForm}.

For the proof of Proposition~\ref{p:vanKampenForm} we also need the following
lemma.
  
\begin{lemma}
  \label{l:independence_of_choice}
  Let $\Xi \in H^{2k}_{\sym}(\tilde K; \Z_2)$ be a cohomology class. Let $c$
  be a $2k$-chain in the ordinary chain group $C_{2k}(\tilde K; \Z_2)$ such
  that $\partial c$ is symmetric. (That is, $\sigma \times \eta$ and $\eta
  \times \sigma$ appear with the same coefficient in $\partial c$ for any $k$-simplex
  $\sigma$ and $(k-1)$-simplex $\eta$.) Then the value $\xi(c)$ is independent
  of the choice of the representative $\xi \in C^{2k}_{\sym}(\tilde K; \Z_2)$ with
  $[\xi] = \Xi$.
\end{lemma}

\begin{proof}
  Let $\xi, \xi' \in C^{2k}_{\sym}(\tilde K; \Z_2)$ be such that $[\xi] =
  [\xi']$. Let $\zeta = \xi - \xi'$. By the previous condition, $\zeta$ is
  cohomologically trivial, thus $\zeta = \delta \rho$ for some $\rho \in
  C^{2k-1}_{\sym}(\tilde K; \Z_2)$. Then we get
  \[
    \xi(c) - \xi'(c) = \zeta(c) = \delta \rho (c) = \rho(\partial c) = 0.
  \]
  The last equality above follows from the facts that $\rho(\sigma \times \eta)
  = \rho(\eta \times \sigma)$ for any $k$-simplex
  $\sigma$ and $(k-1)$-simplex $\eta$ and that $\partial c$ is symmetric.
  Thus we get $\xi(c) = \xi'(c)$ as required. 
\end{proof}

Now, let $\psi$ be a homomorphism as in Theorem~\ref{t:obstruction}. The
conclusion of Theorem~\ref{t:obstruction} is that $[\omega_\psi]$ and $\oo(K)$
are the same homology class. On the one hand $\omega_\psi$ is a representative of
this cohomology class; on the other hand an arbitrary intersection cochain
$\vartheta_g$ of a general position map $g\colon |K| \to \R^{2k}$ is a representative
as well. Consequently, Lemma~\ref{l:independence_of_choice} gives
\begin{equation}
\label{e:omega_theta_c}
  \omega_{\psi}(c) = \vartheta_g(c) \hskip3cm \hbox{for $c \in C_{2k}(\tilde K;
  \Z_2)$ such that $\partial c$ is symmetric.}
\end{equation}

Our strategy is that we will deduce  Proposition~\ref{p:vanKampenForm}
from~\eqref{e:omega_theta_c} by suitable choices of $c$ and $\vartheta_g$.

\begin{proof}[Proof of Proposition~\ref{p:vanKampenForm}(i)]
Because $K$ almost embeds into $M$, there is $\psi\colon C_k(K;\Z_2) \to
  H_k(M;\Z_2)$ such
  that $[\omega_\psi] = \oo(K)$ by Theorem~\ref{t:obstruction}.
  Take $\Lambda$ via $\Lambda(z,z') := \Omega(\psi(z), \psi(z'))$ as described above.

  Take the $2k$-chain $c \in C_{2k}(\tilde K; \Z_2)$ given by $c = \partial
  \kappa \times \partial \kappa'$. Note that $c$ is actually a cycle. Thus
  $\partial c = 0$ and $c$ satisfies the assumptions of
  Lemma~\ref{l:independence_of_choice}.
  Let $g \colon |K| \to \R^{2k}$ be an arbitrary general position map such that
$g(\partial \kappa)$ and $g(\partial \kappa')$ are disjoint. (This is possible
  because $\kappa$ and $\kappa'$ are disjoint.) Then we get
\[
  \Lambda(\partial \kappa, \partial \kappa') = \Omega(\psi(\partial
  \kappa),\psi(\partial(\kappa')) = \omega_\psi(c) = \vartheta_g(c)= 0 
\]
as required. The first equality is the definition of $\Lambda$; the second
  equality comes from the definition of $\omega_\psi$; the third equality is the contents
  of~\eqref{e:omega_theta_c}; 
  and the last equality
  follows from our choice of $g$ which implies that the intersection cochain
  $\vartheta_g$ is identically zero on $c$.
\end{proof}
For a proof of Proposition~\ref{p:vanKampenForm}(ii), we will need
another auxiliary
lemma which will be also reused in a proof of Proposition~\ref{p:induction}.
Given an induced subcomplex $J$ of $K$ on $2k +
3$ vertices let $P_J$ be the set of all unordered pairs $\{\sigma',\tau'\}$ of
  disjoint $k$-simplices in $J$. We also consider a chain $c_J \in C_{2k}(
  J; \Z_2) \subseteq C_{2k}(K; \Z_2)$ given by 
  \[c_J := \sum_{\substack{\{\sigma', \tau'\}
  \in P_J\\ \sigma' \prec \tau'}}  \sigma' \times \tau',\]
  where $\prec$ is an arbitrary fixed order on $k$-simplicies of $K$. We check
  that $c_j$ satisfies the assumptions of Lemma~\ref{l:independence_of_choice},
  that is, $\partial c_J$ is symmetric: Let $t$ be the isomorphism of
  $C_{2k}(K; \Z_2)$ swapping the coordinates. Then $c_J + t(c_J)$ is a cycle formed
  by all products $\sigma' \times \tau'$ over all ordered pairs $(\sigma',
  \tau')$ of disjoint $k$-simplices of $J$. Therefore $\partial c_J + \partial
  t(c_J) = 0$ which implies $\partial c_J = \partial
  t(c_J)$ (as we work over $\Z_2$). Consequently $\partial c_J$ is symmetric as
  required.

\begin{lemma}
\label{l:beta_omega_z_J_new}
  
  Let $\Lambda$ be a symmetric bilinear form on $Z_k(K;\Z_2)$. Let $J$ be
  an induced subcomplex of $K$ on $2k+3$ vertices.
  Then
  for arbitrary vertex $v$ of $J$ the value:
\begin{equation}
\label{e:sum_beta_omega_z_J}
  \sum_{\{\sigma,\tau\} \in P_{J,v}} \Lambda(\partial(\sigma *
v),\partial(\tau * v))
\end{equation}
is independent of the choice of $v$. 
In addition, if $\psi\colon C_k(K; \Z_2) \to H_k(M,\Z_2)$ is a homomorphism and
  $\Lambda(z,z') = \Omega(\psi(z),\psi(z'))$ for every $z, z' \in \Z_k(K;
  \Z_2)$, then the value of the sum~\eqref{e:sum_beta_omega_z_J} equals
  $\omega_\psi(c_J)$.
\end{lemma}

\begin{proof}
  First, we extend $\Lambda$ to a symmetric bilinear form $\Lambda'$ on
  $C_k(K;\Z_2)$. If we are in the `in addition' case that $\Lambda(z,z') =
  \Omega(\psi(z),\psi(z'))$, then we simply set $\Lambda'(\sigma, \tau) =
  \Omega(\psi(\sigma),\psi(\tau))$ for $k$-simplices $\sigma$ and $\tau$ of $J$.

  In any other case, $\Lambda'$ can be obtained in the following way: We pick a vertex $v_0 \in J$. Then for
  $k$-simplices $\sigma$ and $\tau$ in $J$ we set
$$\Lambda'(\sigma, \tau) = 
\begin{cases}
  0 & \hbox{if $\sigma$ or $\tau$ contains $v_0$}; \\
  \Lambda(\partial(\sigma * v_0),\partial(\tau * v_0)) &
  \hbox{otherwise}. \\
\end{cases}$$
  It is easy to check that $\Lambda(\partial(\sigma * v_0),\partial(\tau *
  v_0)) = \Lambda'(\partial(\sigma * v_0),\partial(\tau * v_0))$ if
  both $\sigma$ and $\tau$ avoid $v_0$. In addition, because the cycles
  $\partial(\sigma * v_0)$ for $\sigma$ avoiding $v_0$ generate $Z_k(K;
  \Z_2)$, we get $\Lambda(z,z') = \Lambda'(z,z')$ for any $z, z' \in Z_k(K;
  \Z_2)$.

Our aim is to show that 
\begin{equation}
  \label{e:RHS_independent}
  \sum_{\{\sigma,\tau\} \in P_{J,v}} \Lambda(\partial(\sigma *
   v),\partial(\tau * v)) =  \sum_{\{\sigma', \tau'\} \in
   P_J}\Lambda'(\sigma', \tau').
\end{equation}
  Because the right-hand side of~\eqref{e:RHS_independent} is independent of the
  choice of $v$, this will prove the first part of the lemma. If we are in the
  `in addition' case, then 
  we further get
  \[
  \sum_{\{\sigma', \tau'\} \in P_J}\Lambda'(\sigma', \tau') = 
  \sum_{\{\sigma', \tau'\} \in P_J}\Omega(\psi(\sigma'), \psi(\tau')) = 
  \omega_\psi(c_J)
  \]
where the last equality follows from the definitions of $\omega_\psi$ and
  $c_J$ (from $c_J$ we only need that for every $\{\sigma', \tau'\} \in P_J$
  exactly one of the products $\sigma' \times \tau'$ and $\tau' \times \sigma'$
  appears with coefficient $1$ in $c_J$).
  This proves the lemma as soon as we show~\eqref{e:RHS_independent}.

 By bilinearity of the intersection form,
 \[\sum_{\{\sigma,\tau\} \in P_{J,v}} \Lambda(\partial(\sigma *
   v),\partial(\tau * v)) = 
   \sum_{\{\sigma,\tau\} \in P_{J,v}} \Lambda'(\partial(\sigma *
      v),\partial(\tau * v))
   =
   \sum_{\{\sigma',\tau'\} \in Q_J}
   a_{\sigma', \tau'} \Lambda'(\sigma',
   \tau'),
 \]
 where $Q_J$ is the set of all (unordered) pairs of distinct $k$-simplices in $J$ and
 $a_{\sigma', \tau'}$ is the number of \emph{appearances} of $\sigma' \subseteq
 \sigma * v$, $\tau' \subseteq \tau * v$
, or $\sigma' \subseteq
 \tau *v$, $\tau' \subseteq \sigma * v$ 
 over all unordered pairs $\{\sigma,
  \tau\} \in P_{J,v}$, modulo $2$. Therefore, for
  checking~\eqref{e:RHS_independent}, it remains to show that $a_{\sigma', \tau'}
  = 1$ if $\{\sigma', \tau'\} \in P_J$ (that is, $\sigma'$ and $\tau'$ are
  disjoint) and $a_{\sigma', \tau'}
  = 0$ if $\{\sigma', \tau'\} \in Q_J \setminus P_J$ ($\sigma'$ and $\tau'$ are
  not disjoint). We also remark that for any $\{\sigma, \tau\} \in P_{J,v}$
  only one of the two options above for appearance is possible, thus we can
  safely assume $\sigma' \subseteq \sigma * v$ and $\tau' \subseteq \tau *
  v$ when counting.

If $\sigma'$ and $\tau'$ share a vertex different from $v$, then there is no
appearance as $\sigma$ and $\tau$ are required to be disjoint and consequently
$\sigma*v$ and $\tau * v$ share only $v$.

If $\sigma'$ and $\tau'$ share $v$ but no other vertex, then there are exactly
two vertices $w_1, w_2$ of $J$ outside $\sigma' \cup \tau'$. Consequently, there are two
appearances $\sigma = (\sigma' - v)* w_1, \tau = (\tau' - v)* w_2$ and
$\sigma = (\sigma' - v)* w_2, \tau = (\tau' - v)* w_1$.

If neither $\sigma'$ nor $\tau'$ contains $v$, then there is the exactly one
appearance: $\sigma = \sigma', \tau = \tau'$. 

  If exactly one of the simplices $\sigma'$, $\tau'$ contains $v$, say $\sigma'$ contains
$v$, then there is exactly one appearance $\sigma = (\sigma' - v)* w$, $\tau =
\tau'$ where $w$ is the vertex of $J$ not in $\sigma' \cup \tau$.
\end{proof}

\begin{proof}[Proof of Proposition~\ref{p:vanKampenForm}(ii)]
We will take $\psi$ and $\Lambda$ in the same way as in the proof of (i).
  It is well known that there is a general position map $g \colon |K| \to
  \R^{2k}$
  such that $\vartheta_g(c_J) = 1$
  with $c_J$
  defined above Lemma~\ref{l:beta_omega_z_J_new}; see, e.g.,~\cite[Example~3.5]{melikhov09}.
  Then by Lemma~\ref{l:beta_omega_z_J_new} and by
  \eqref{e:omega_theta_c} 
  we get
  \[    \sum_{\{\sigma, \tau\} \in P_{J,v}} \Lambda(\partial(\sigma *
	v),\partial(\tau * v)) = \omega_\psi(c_J) = \vartheta_g(c_J) = 1
      \]
      as required.
\end{proof}

\subsection{Proof of Proposition~\ref{p:induction}}

\begin{proof}
  We will prove both items of Proposition~\ref{p:induction}
  simultaneously by induction in the rank of $\Lambda$.
  If $\operatorname{rank}\Lambda = 0$, 
 then $\Lambda(z,z')=0$ for any two cycles $z,z'\in Z_k(K;\Z_2)$. In particular,
  (C\ref{it:kfive}) can only be satisfied in this case if the
  there is no induced subcomplex $J$ of $K$ on $2k+3$
  vertices, i.e., if $n\leq 2k+1$. (Recall that $\Delta_n$ has $n+1$
  vertices.) This yields the base for the induction in both cases. 
 
  Assume now that we are in the case \eqref{e:general_Lambda} and not in \eqref{e:symplectic_Lambda}, which has a better bound.
 Since $\partial \kappa$, where $\kappa$ runs through all $(k+1)$-simplices in $K$, generate $Z_k(K;\Z_2)$, there is a $(k+1)$-simplex $\kappa$ 
  for which $\Lambda(\partial \kappa,\partial \kappa)=1$. Up to
  reordering the vertices, we may assume that $\kappa$ is the simplex on the last $(k+2)$ vertices of $K$.
  Define\footnote{The definition of $\Lambda'$ was obtained as follows. We considered the projection $\pi\colon x\mapsto x-\Lambda(x,\partial\kappa)\partial\kappa$ to the ``orthogonal'' complement of $\partial\kappa$,
  and we took $\Lambda'$ as the pullback of $\Lambda$ under $\pi$.}
 
  \begin{equation}\Lambda'(x,y):=\Lambda(x,y)-\Lambda(x,\partial\kappa)\Lambda(\partial\kappa,y).\label{eq:projectionOne}\end{equation}
    Then $\Lambda'$ is a symmetric bilinear form. If for some $y_0\in Z_k(K;\Z_2)$,
    $\Lambda(x,y_0)=0$ for all $x\in Z_k(K;\Z_2)$, the same is true for
    $\Lambda'$. Moreover $\Lambda'(x,\partial\kappa)=0$ for every $x$, yet $\Lambda(\partial\kappa,\partial\kappa)\neq
    0$. In other words,
    $\operatorname{Ker}\Lambda\subsetneq\operatorname{Ker}\Lambda'$, and so
    $\operatorname{rank}\Lambda'\leq \operatorname{rank}\Lambda-1$.
   Let $K'$ be the subcomplex of $K$ formed by the first $n-k-1$ vertices of
   $K$. We are going to show that $\Lambda'$ restricted to $Z_k(K';\Z_2)$ satisfies both
   (C\ref{it:disjoint}) and (C\ref{it:kfive}) (see
   Proposition~\ref{p:vanKampenForm}).
  Due to (C\ref{it:disjoint}) for $\Lambda$ and \eqref{eq:projectionOne},
  $\Lambda(x,y)=\Lambda'(x,y)$ as long as at least one of the chains $x,y$ is
  disjoint with $\kappa$. This is clearly true when verifying
  (C\ref{it:disjoint}) for $\Lambda'$ on $Z_k(K'; \Z_2)$. It also holds
  when verifying (C\ref{it:kfive}), if $v\neq v_{n-k-1}$.
  However, Lemma~\ref{l:beta_omega_z_J_new}
  then tells us that (C\ref{it:kfive}) holds for $v=v_{n-k-1}$ as well.

 By induction, $n-k-1\leq (2k+1)+(k+1)\operatorname{rank}\Lambda'$, leading to
 \[n\leq (2k+1)+(k+1)\left(\operatorname{rank}\Lambda'+1\right)\leq (2k+1)+(k+1)\operatorname{rank}\Lambda.\]
 
 Let us now prove the case~\eqref{e:symplectic_Lambda}.
 If $\Lambda(z,z)=0$ for every $z\in Z_k(K;\Z_2)$
 and the rank of $\Lambda$ is non-zero, there are two simplices $\kappa$ and $\kappa'$ for which
 $\Lambda(\partial\kappa,\partial\kappa')=1$.
 By reordering the vertices, if necessary, we may assume that $\kappa'$ is the subcomplex on the last $(k+2)$ vertices of $K$.
  Define\footnote{Here we consider the pullback of $\Lambda$ by the
  ``orthogonal'' projection to $\{\partial\kappa,\partial\kappa'\}^\perp$.}
  \begin{equation}\label{eq:projectionTwo}
  \Lambda'(x,y):=\Lambda(x,y) - \Lambda(\partial\kappa,y)\Lambda(x,\partial\kappa')-\Lambda(\partial\kappa',y)\Lambda(x,\partial\kappa).
 \end{equation}
  This is a symmetric bilinear form. If for some $y_0\in Z_k(K;\Z_2)$, $\Lambda(x,y_0)=0$ for all $x\in Z_k(K;\Z_2)$,  the same is true for $\Lambda'$. Moreover $\partial\kappa,\partial\kappa'\in \operatorname{Ker}\Lambda'\setminus\operatorname{Ker}\Lambda$.
  If $\partial\kappa'$ could be written as a linear combination $a\cdot\partial\kappa + b\cdot z$, where $a,b\in\Z_2$ and $z\in\operatorname{Ker}\Lambda$, then
  $1=\Lambda(\partial\kappa,a\partial\kappa+bz)=
  a\Lambda(\partial\kappa,\partial\kappa)+ b\Lambda(\partial\kappa,z)=a\cdot 0 + b\cdot 0 = 0$, a contradiction. 
  It follows that the vectors $\partial\kappa$ and $\partial\kappa'$ are
  linearly independent modulo $\operatorname{Ker}\Lambda$.
  Hence $\dim\operatorname{Ker}\Lambda'\geq \dim\operatorname{Ker}\Lambda+2$ and $\operatorname{rank}\Lambda' \leq \operatorname{rank}\Lambda - 2$.
  
 Let now $K'$ be the complex formed from $K$ by deleting the vertices of $\kappa'$.
 We are going to show that $\Lambda'$ restricted to $Z_k(K';\Z_2)$ satisfies both
  (C\ref{it:disjoint}) and (C\ref{it:kfive}).
 However, if $z,z'\in Z_k(K';\Z_2)$, then both cycles $z$ and $z'$ are disjoint
  with $\kappa'$. Then (C\ref{it:disjoint}) for $\Lambda$ and \eqref{eq:projectionTwo}
 imply that $\Lambda'(z,z')=\Lambda(z,z')$. In particular, $\Lambda'$ satisfies
  both (C\ref{it:disjoint}) and (C\ref{it:kfive}) on $K'$.
 By induction, $n-k-2\leq (2k+1)+\frac{k+2}{2}\operatorname{rank}\Lambda'$, leading to
 \[n\leq (2k+1)+\frac{k+2}{2}\left(\operatorname{rank}\Lambda'+2\right)\leq (2k+1)+\frac{k+2}{2}\operatorname{rank}\Lambda.\qedhere\]
\end{proof}

\section{Completeness}
\label{s:complete}
The aim of this section is to prove Theorem~\ref{t:complete} and then 
Theorem~\ref{t:Z2_computable}. 
Therefore, for
this section, in addition to our standard conventions from
Section~\ref{s:prelim}, we assume that $k \geq
3$ and $M$ is $(k-1)$-connected; unless explicitly stated otherwise, which
occurs only in Remark~\ref{r:homotopy_computable}.

\begin{proof}[Proof of Theorem~\ref{t:complete}]
All considerations in this proof are over $\Z$. According to the statement, we
also assume that we are given
$\psi \colon C_k(K;\Z) \to H_k(M; \Z)$ such that $[\omega_\psi] - \oo(K)$
is trivial. 

  Let $B \subseteq M \setminus \partial M$ be a (closed) $2k$-ball. Assume that $g \colon |K| \to B$ is a general
  position map with $g(|K^{(k-1)}|) \subseteq \partial B$. Our first step will be to find
  a general position map $f' \colon |K| \to (\overline{M \setminus
  B})\setminus \partial M $, agreeing with
  $g$ on $|K^{(k-1)}|$ such that
  $\omega_{f',g} = \omega_\psi$ where $\omega_{f',g}$ is as in Section~\ref{s:transfer}; see~\eqref{e:omega_fg}. The second step will be to find a homotopy of
$f'$ to a general position map $f''$ such that $\vartheta_{f''} = 0$. The third
step will be to remove the remaining self-intersections via standard tricks.

\emph{Step 1.}    We define $f'$ on each $k$-simplex $\sigma \in K$ separately.  We only need
    that $\psi(\sigma) = [f'(\sigma) - g(\sigma)]$. Then $\omega_{f',g} =
    \omega_\psi$ via~\eqref{e:omega_fg}. 
    
By Hurewicz theorem $H_k(M; \Z) \cong \pi_k(M; \Z)$, let $h \colon \pi_k(M) 
 \to H_k(M; \Z)$ be the Hurewicz isomorphism. We also recall the definition of
 $h$ (see~\cite[Chap.3, \S 1.1]{prasolov07}). Given a map $\gamma \colon (S^k,
 s_0) \to (M, x_0)$ where $s_0 \in S^k$, $x_0 \in M$, we set $h(\gamma) :=
 \gamma_*([S^n])$ where $\gamma_*\colon H_k(S^k) \to H_k(M)$ is the induced map
 on homology and $[S^k]$ is the fundamental class. The map $\gamma$ can be also
 regarded as a map from $B^k$ to $M$, constant on $\partial B^k$.

Consider temporarily $\sigma$ as a
simplex in $\R^d$ containing the origin and let $\sigma_\bullet = \frac12 \cdot \sigma$
be a homothetic smaller copy of $\sigma$. Let $f_\bullet \colon
\sigma_\bullet \to M$ be a map, constant on $\partial \sigma_\bullet$,
representing the class $h^{-1}(\psi(\sigma))$ in $\pi_k(M)$. Now we want to
extend $f_\bullet$ to $\sigma$. We have $\overline{\sigma \setminus \sigma_\bullet}
\cong \partial \sigma \times I$, thus we can describe the extension of
$f_\bullet$ on $\partial \sigma \times I$ identifying $\partial \sigma$ with
$\partial \sigma \times \{0\}$ and $\partial \sigma_\bullet$ with $\partial
\sigma \times \{1\}$. Let $f_\bullet$ coincide with $g$ on $\partial
\sigma \times \{0\}$, then we first extend $f_\bullet$ to $\partial
\sigma \times [0,\frac12]$ as a homotopy in $B$ from $g$ to a constant map. Now let
$p \colon [\frac12, 1] \to M$ be an arbitrary path from $f_\bullet(\partial \sigma
\times \{\frac 12\})$ to $f_\bullet(\partial \sigma \times \{1\})$ (recall that
$f_\bullet$ is constant on both $\partial \sigma
\times \{\frac12\}$ and $\partial \sigma \times \{1\}$). For $s \in [\frac12,1]$ we
define $f_\bullet((x,s)) := p(s)$. 

It follows from the construction that the homology class of $f_\bullet(\sigma)
- g(\sigma)$ is $\psi(\sigma)$. Now it is sufficient to consider a homotopy of
$f_\bullet$, constant on $\partial \sigma$, such that the resulting map maps
  the interior of $\sigma$ to $M \setminus (B \cup \partial M)$, and then perform a perturbation to
a required general position map $f'$. 

\emph{Step 2.} From the assumption that $[\omega_{f',g} - \vartheta_g] = [\omega_\psi]
  - \oo (K)$ is trivial and by Lemma~\ref{l:omega}, we get that
  $[\vartheta_{f'}]$ is trivial. By~\eqref{e:sum_finger_moves} this means that 
\[
  \vartheta_{f'} = \sum\limits_{\eta, \mu} n_{\eta,\mu} \delta
  \varepsilon_{\eta,\mu}
\]
  where the sum is over all pairs $(\eta, \mu)$, where $\eta$ is a
  $(k-1)$-simplex, $\mu$ is a $k$-simplex and $\eta \cap \mu = \emptyset$; $\varepsilon_{\eta,\mu}$ are the elementary cochains defined
  above~\eqref{e:sum_finger_moves}; and $n_{\eta,\mu} \in \Z$.
 If $M$ were $\R^{2k}$, then for any $(\mu, \eta)$ we could apply `van Kampen
 finger moves' as described in~\cite[\S 2.4]{freedman-krushkal-teichner94} and
 which provide a homotopy from $f'$ to another map $\hat f$ such that
 $\vartheta_{f'} = \vartheta_{\hat f} \pm \delta\varepsilon_{\eta,\mu}$. (Both choices
$\pm \delta\varepsilon_{\eta,\mu}$ are possible.) In order to adapt to our situation of
 general $M$, we consider a general position PL-path $p$ in the interior
 of $M$ connecting a point in
 the interior of $\eta$ with a point in the interior of $\mu$. Then we consider
 a regular neighborhood $N_p$ of $p$, which is a ball
 by~\cite[Corollary~3.27]{rourke-sanderson72}. We perform the finger-move as
 in~\cite[\S 2.4]{freedman-krushkal-teichner94} inside $N_p$ which has exactly
 same effect on $\vartheta_{f'}$ as in $\R^{2k}$. Therefore we can get a
 homotopy from $f'$ to $f''$ with the required property $\vartheta_{f''} = 0$
 by successively applying finger moves.\footnote{This step of obtaining $f''$ out
 of $f'$ seems to be the bulk of the work~\cite{johnson02}. However, the
 standard approach via finger moves presented here seems to be simpler. (We
 could not directly refer to~\cite{johnson02} in this paragraph, as Johnson
 works in smooth category.)} In addition, $f''(|K|)$ avoids $\partial M$.

 \emph{Step 3.} Finally, we want to build the required embedding $f$ out of $f''$. This
  can be done by standard tricks such as the Whitney trick. They are described
  in~\cite[\S 2.4]{freedman-krushkal-teichner94} for $M = \R^{2k}$. The key
  observation is that all tricks are based on finding a copy of $S^1$ in
  $f(|K|)$ in general position, filling this $S^1$ with a general position disk $D$, 
  taking a regular neighborhood $N_D$ of $D$, which is a ball, and removing the singularities inside $N_D$.
  In a simply connected manifold, these steps work in verbatim. This finishes
  the proof of Theorem~\ref{t:complete}.
\end{proof}

Now, we provide (somewhat weaker) analogy of Theorem~\ref{t:complete} for the
$\Z_2$ case used in the proof of Theorem~\ref{t:Z2_computable}.

\begin{proposition}
  \label{p:independently_even}
  Let us assume that $k \geq 3$ and $M$ is compact $(k-1)$-connected. Then, the
  following conditions are equivalent.
\begin{enumerate}[$(i)$]
\item 
There is a homomorphism $\psi\colon C_k(K; \Z_2) \to H_k(M; \Z_2)$ such that
$[\omega_\psi] - \oo(K) = 0$ (over $\Z_2$).
\item There is a general position map $f'' \colon |K| \to M$ such that for every
  pair $(\sigma, \tau)$ of disjoint $k$-simplices, $f''(\sigma)$ and
  $f''(\tau)$ have an even number of intersections.
\item There is a general position map $f'' \colon |K| \to M$ such that for every
  pair $(\sigma, \tau)$ of $k$-simplices, $f''(\sigma)$ and
  $f''(\tau)$ have an even number of intersections. (We can even assume that
    $f''(\sigma)$ is an embedding on every $k$-simplex $\sigma$ and that
    $f''(\sigma)$ and $f''(\tau)$ share only $f''(\sigma \cap \tau)$, if
    $\sigma$ and $\tau$ are $k$-simplices which are not
  disjoint.)
\end{enumerate}
\end{proposition}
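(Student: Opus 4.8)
The plan is to prove the cycle of implications
$(iii) \Rightarrow (ii) \Rightarrow (i) \Rightarrow (iii)$; the implication
$(iii) \Rightarrow (ii)$ is trivial since disjoint simplices are a special
case of arbitrary simplices. The implication $(ii) \Rightarrow (i)$ is
essentially the $\Z_2$-reduction of Theorem~\ref{t:obstruction}: given a
general position map $f''$ with $\vartheta_{f''}(\sigma \times \tau)=0$ for
all disjoint pairs (working over $\Z_2$), we have $\vartheta_{f''}=0$ as an
element of $C^{2k}_{\skw}(\tilde K;\Z_2)$, so in particular $[\vartheta_{f''}]$
is trivial. Since $M$ is $(k-1)$-connected, condition (H) holds by
Corollary~\ref{c:H_OK}, so $f''$ is homotopic to a map $f'$ whose
$(k-1)$-skeleton image fits into a ball $B$; applying
Corollary~\ref{c:transfer} (in its $\Z_2$ version) then produces a
homomorphism $\psi\colon C_k(K;\Z_2)\to H_k(M;\Z_2)$ with
$[\omega_\psi]_F-\tilde\oo(K)=0$.

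The substantive implication is $(i) \Rightarrow (iii)$, and here I would
essentially re-run the construction in the proof of Theorem~\ref{t:complete},
but over $\Z_2$ and stopping after Step~2. Concretely: fix a $2k$-ball
$B\subseteq M$ and a general position map $g\colon |K|\to B$ with
$g(|L|)\subseteq\partial B$. Using the Hurewicz isomorphism
$H_k(M;\Z)\cong\pi_k(M)$ (available because $M$ is $(k-1)$-connected and
$k\geq 3\geq 2$), and lifting each $\psi(\sigma)\in H_k(M;\Z_2)$ to an integral
class, carry out Step~1 of the proof of Theorem~\ref{t:complete} verbatim to
obtain a general position map $f'\colon |K|\to\overline{M\setminus B}$ agreeing
with $g$ on $|L|$ and satisfying $\omega_{f',g}=\omega_\psi$ modulo $2$. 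Then
Lemma~\ref{l:omega} gives $\vartheta_{f'}=\omega_{f',g}-\vartheta_g$, so over
$\Z_2$ the hypothesis $[\omega_\psi]_F-\tilde\oo(K)=0$ forces
$[\vartheta_{f'}]$ to be trivial, i.e.\ $\vartheta_{f'}=\sum n_{\eta,\mu}
\varphi_{\eta,\mu}$ with $n_{\eta,\mu}\in\Z_2$. Now apply the finger-move
homotopies from Step~2 (performed inside regular neighborhoods $N_p$ of
general position paths, exactly as in the proof of Theorem~\ref{t:complete})
to obtain a homotopic general position map $f''$ with $\vartheta_{f''}=0$ over
$\Z_2$. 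This $f''$ has an even number of intersections between $f''(\sigma)$
and $f''(\tau)$ for every disjoint pair, which is already condition (ii)
restricted to disjoint simplices.

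It remains to upgrade from disjoint pairs to \emph{all} pairs, and to arrange
the embedding-on-each-simplex and ``share only $f''(\sigma\cap\tau)$''
refinements in the parenthetical of (iii). For this I would first perturb
$f''$ (by an arbitrarily small general position homotopy, using
\cite[Lemma~4.8]{hudson69}) so that it is an embedding on each individual
$k$-simplex and so that, for a non-disjoint pair $\sigma,\tau$, the images
$f''(\sigma)$ and $f''(\tau)$ meet only along $f''(\sigma\cap\tau)$ together
with finitely many transversal double points in the interiors; this is the
standard general position normal form for a simplicial complex mapped into a
manifold of twice the dimension. Such a perturbation changes no intersection
numbers between disjoint pairs, so the ``disjoint'' part of (ii)/(iii) is
preserved. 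Then for a non-disjoint pair the parity of
$|f''(\sigma)\cap f''(\tau)|$ away from $f''(\sigma\cap\tau)$ can be controlled
by a local finger move of a finger of $\sigma$ across the interior of $\tau$
(or along a face), supported in a small ball disjoint from all other simplices
not containing $\sigma\cup\tau$; this changes that single parity without
affecting any intersection number among disjoint pairs. Iterating over all
non-disjoint pairs yields the desired $f''$. The main obstacle I anticipate is
precisely this last bookkeeping step: making sure that the local moves used to
fix parities of non-disjoint pairs can be localized so as not to disturb the
(already arranged) even-ness for disjoint pairs, which requires the moves to
be supported in balls meeting only the two simplices in question; this is the
$2k$-dimensional general position argument, routine in spirit but needing
care, and it is the only place where the simple $\Z_2$-counting does not
immediately suffice.
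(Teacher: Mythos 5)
Your cycle $(iii)\Rightarrow(ii)\Rightarrow(i)$ matches the paper: $(iii)\Rightarrow(ii)$ is trivial, and $(ii)\Rightarrow(i)$ is Corollary~\ref{c:transfer} plus the remark that $(k-1)$-connectedness gives (H). Your first half of $(i)\Rightarrow(iii)$ (producing a general position $f''$ with $\vartheta_{f''}=0$ over $\Z_2$, i.e.\ even parity on disjoint pairs) is also the paper's argument, namely Steps~1 and~2 of the proof of Theorem~\ref{t:complete} run over $\Z_2$. One small deviation there: you propose lifting each $\psi(\sigma)\in H_k(M;\Z_2)$ to an integral class and then applying the integral Hurewicz isomorphism, whereas the paper observes it only needs the Hurewicz map $\pi_k(M)\to H_k(M;\Z_2)$ to be an epimorphism. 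Your lift does exist here because $(k-1)$-connectedness kills the Tor term in universal coefficients, so $H_k(M;\Z_2)\cong H_k(M;\Z)\otimes\Z_2$ — this is fine, but worth noting it uses $(k-1)$-connectedness once more, not just $k\geq 2$.

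The genuine gap is in your upgrade from the disjoint-pair case of $(ii)$ to the full statement $(iii)$. The paper handles $(ii)\Rightarrow(iii)$ by invoking Step~3 of the proof of Theorem~\ref{t:complete} restricted to simplices sharing at least one vertex: the piping/Whitney-type tricks of Freedman--Krushkal--Teichner \S 2.4, performed inside regular neighborhoods of paths and disks, \emph{remove all} double points between adjacent simplices, with no parity hypothesis needed, because the Whitney circle can be routed through a shared vertex and filled in the simply-connected $M$ (and for $k\geq 3$ the $2$-disk generically misses all other $k$-simplices). This simultaneously delivers $(iii)$ and its parenthetical. Your proposal instead tries to merely \emph{control parities} of non-disjoint pairs by finger moves, and this step as stated does not work. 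A finger of $\sigma$ pushed ``across the interior of $\tau$'' creates two new double points and leaves the parity unchanged. A finger pushed around a $(k-1)$-face $\eta\subset\tau$ does change the $\sigma$--$\tau$ parity by $1$, but it simultaneously changes the parity of $\sigma$--$\rho$ for \emph{every} $k$-simplex $\rho\supset\eta$, so it is not a move on a ``single parity''; your claim that the move is supported in a ball meeting only the two simplices in question is not achievable in a $2k$-manifold, since the finger is a $(k{+}1)$-dimensional region and $(k{+}1)+k\geq 2k$. One can arrange that the affected $\rho$ are all adjacent to $\sigma$ (choose $\eta$ through a shared vertex), so disjoint pairs stay even, but you would then still need a linear-algebraic argument that the system of parities over non-disjoint pairs can be solved by these correlated moves — you flag exactly this as the unresolved ``bookkeeping,'' and it is. You also leave the parenthetical of $(iii)$ unproved, whereas the paper's removal argument gets it for free.
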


\begin{proof}[Proof of Proposition~\ref{p:independently_even}]
  The implication $(ii) \Rightarrow (i)$ follows
  from Proposition~\ref{p:transfer}. (Any map from a $(k-1)$-complex into a $(k-1)$-connected manifold is nullhomotopic.) The implication $(iii) \Rightarrow (ii)$ is
  obvious.
  
  Thus it remains to prove $(i) \Rightarrow
  (ii)$, and $(ii) \Rightarrow (iii)$. Note that the condition on $f''$ from
  $(ii)$ is equivalent with
  $\vartheta_{f''} = 0$. 
  
  The proof of $(i) \Rightarrow
  (ii)$ is analogous to steps 1. and 2. in the proof of Theorem~\ref{t:complete}, thus we
  only point out the single difference:
  In step 1 for $\Z$ we use the Hurewicz isomorphism $h$; however, we
  only use that $h$ is an epimorphism. If we consider $h$ as a homomorphism $h
  \colon \pi_k(M) \to H_k(M, \Z_2)$ then the proof that $h$ is an epimorphism
  from~\cite[Theorem 3.2]{prasolov07} works in verbatim.

  The proof of $(ii) \Rightarrow (iii)$ follows the step 3 of the proof of
  Theorem~\ref{t:complete}. However, we
  only perform the tricks that remove self-intersections of simplices that share
  at least one vertex.
  (For comparison, the reason why we cannot get rid of all singularities is
  that we cannot perform the Whitney trick. Given two disjoint $k$-simplices
  $\sigma$ and $\tau$ in $K$ the Whitney trick may remove a pair of
  intersection points $\{x, x'\} \subseteq f''(\sigma) \cap f''(\tau)$ provided
  that the signs at $x$ and $x'$ are opposite. But we do not know whether we
  get opposite signs if we perform computations only over $\Z_2$.)
\end{proof}

Now, Theorem~\ref{t:Z2_computable} follows quickly.

\begin{proof}
  By Theorem~\ref{t:quadratic} and Proposition~\ref{p:independently_even}, it is
  sufficient to find out whether the system of equations~\eqref{e:quadratic}
  has a solution in $\Z_2$. This is decidable as $\Z_2$ is finite. 
\end{proof}

\begin{remark}
  \label{r:homotopy_computable}
  Let us consider another algorithmic question: Given a $k$-complex $K$, a compact PL
  $2k$-manifold $M$, not necessarily $(k-1)$-connected, and a general position map $f \colon |K| \to M$. We would
  like to know whether $f$ is homotopic to an embedding. Let us
  also assume that $f$ is presented on the input via its intersection cochain
  $\vartheta_f \in C^{2k}_{\alts}(\tilde K; \Z)$ over the integers.
  Then deducing whether $\oo_f = [\vartheta_f]=0$ is of course
  efficiently computable even over the integers. (In formula~\eqref{e:quadratic}, the term
  $\yy_\sigma^T\AA_{\Omega}\yy_\tau$ on the left side disappears while we have
  $\vartheta_f$ on the right side, thus the equations become linear.)

Therefore, the question whether $f$ is homotopic to an embedding can be solved
efficiently whenever vanishing $\oo_f$ is a complete obstruction for $f$
  being homotopic to an embedding. According to
  Johnson~\cite[Theorem~4]{johnson02} this occurs when $M$ is closed, smooth and
  simply connected. However, the assumption that $M$ is compact, PL and simply connected
  is also sufficient by checking Step~2 of the proof of
  Theorem~\ref{t:complete}; we leave the details for the interested reader.
\end{remark}

\section{Conclusions and open problems}

\label{s:problems}
Here we mention few conclusions and open problems, sometimes touched in the
introduction.

\subparagraph{Existence of the obstruction and completeness.}

Given an almost embedding $f \colon |K| \to M$, the obstruction class $\oo_f$ is well
defined even if we do not assume that $f$ restricted to the
$(k-1)$-skeleton of $K$ is nullhomotopic.  However, we need to assume nullhomotopy for describing the obstruction as in Theorem~\ref{t:obstruction}. In
particular, our approach gives $\Gamma_{K,M} \subseteq \Theta_{K,M}$ where
$\Theta_{K,M} := \{[\omega_\psi] - \oo(K); \psi \in \hom(C_k(K; R), H_k(M;
R))\}$ and $\Gamma_{K,M} := \{\oo_f; f \colon |K| \to M\}$ (considering
only general position PL maps). In particular, if there is an almost embedding 
$f\colon |K| \to M$, then the trivial class belongs to $\Gamma_{K,M}$ and
thereby to $\Theta_{K,M}$ as well, which is in principle our obstruction.

\begin{problem}[Existence]
\label{pr:existence}
Is there an easy to describe superset $\Theta_{K,M}$ of $\Gamma_{K,M}$
  even if we do not assume the nullhomotopy condition, perhaps via (co)homology of $M$ or $K$.
\end{problem}

\begin{problem}[Completeness]
\label{pr:complete}
  When $0 \in \Theta_{K,M}$ implies $0 \in \Gamma_{K,M}$? When $0 \in
  \Gamma_{K,M}$ implies that there is an embedding $f\colon K \to M$?
\end{problem}

If we do not assume that the restriction of every map $f\colon K \to M$
to the $(k-1)$-skeleton is nullhomotopic, the answer to the first question of
Problem~\ref{pr:complete} may of course depend on the answer to
Problem~\ref{pr:existence}. In our proof of Theorem~\ref{t:complete}, the
implication $0 \in \Theta_{K,M} \Rightarrow 0 \in \Gamma_{K,M}$ was the
contents of steps 1 and 2 in the proof and there we really used
$(k-1)$-connectedness of the manifold. The implication $0 \in
  \Gamma_{K,M}$ implies that there is an embedding $f\colon K \to M$ was the
  contents of step 3 and it seems to  be generally well understood. There we
  used $k \geq 3$ and the fact that $M$ is simply-connected. This implication
  does not hold if $k=2$ even if $M =
  \R^{2k}$;~\cite{freedman-krushkal-teichner94}. We also do not expect that the
  requirement that $M$ is simply-connected can be removed in general.

  Somewhat specific case occurs when $k=1$, that is, $K$ is a graph and $M$ is
  a surface, for simplicity connected, otherwise we can treat every
  component separately (let us remark that in this case the nullhomotopy
  condition is satisfied). If $M = \R^2$ then even 
  vanishing the $\Z_2$-version of the van Kampen obstruction implies that $K$
  is a planar graph~\cite{hanani34, tutte70}. When $M$ is a general surface,
  Fulek and Kyn\v{c}l~\cite{fulek-kyncl19} in their noticeable work provide 
  an example of $K$, $M$ and a drawing $f\colon K
  \to M$ such that $\vartheta_f = 0$ over $\Z_2$ whereas $K$ does not embed in
  $M$. This shows that the $\Z_2$-version of our obstruction is not a complete
  obstruction for embeddability of graphs into surfaces. The $\Z$-case is not
  answered yet and it is essentially equivalent to Problem~5.3
  in~\cite{fulek-kyncl19} (restated in our language):
\begin{problem}
  Assume that $K$ is a graph and $M$ a connected orientable surface.
  Assume that there is a general position map $f\colon K \to M$ with
  $\vartheta_f = 0$ (over $\Z$). Does it follow that $K$ embeds in $M$?
\end{problem}

\subparagraph{Computational aspects.} We have already mentioned
Question~\ref{q:e_decidable} in the introduction. Here we only specify a few
concrete cases when $M$ is $(k-1)$-connected and this question seems to be
easiest to approach.

\begin{problem}
  \begin{enumerate}[$(i)$]
  \item
  Is $\emb(k, S^k \times S^k)$ decidable for $k \geq 3$.
  \item
  Is $\emb(4, \HH P^2)$ decidable, where $\HH P^2$ is the quaternionic projective
  plane? (We remark that $\HH P^2$ is an $8$-dimensional manifold.)
  \end{enumerate}
\end{problem}

In the first case the intersection form has matrix 
$
  \AA_\Omega = 
\begin{pmatrix}
0 & 1 \\
(-1)^k & 0 \\
\end{pmatrix}.
$ For $(ii)$, $\AA_\Omega = (1)$.

\subparagraph{Homological almost embeddings.} Motivated by approach
in~\cite{goaoc-patak-patakova-tancer-wagner17} we pose: 

\begin{problem}
Can Theorem~\ref{t:obstruction} be upgraded to homological almost embeddings?
(We refer to~\cite{goaoc-patak-patakova-tancer-wagner17} for a definition
of homological almost embeddings.)
\end{problem}

\section*{Acknowledgments} We would like to thank Xavier Goaoc, Zuzana
Pat\'{a}kov\'{a} and Uli Wagner for discussions in early stages of this
project. We also thank Karim Adiprasito for explaining us the consequences of
his work in~\cite{adiprasito18_arxiv}. 
\bibliographystyle{alpha}
\bibliography{kuhnel2}
\end{document}